\numberwithin{equation}{section}
\newtheorem{thm}{Theorem}[section]
\newtheorem{prop}[thm]{Proposition}
\newtheorem{lem}[thm]{Lemma}
\newtheorem{dfn}[thm]{Definition}
\theoremstyle{definition} % 以下の環境では英文がイタリックにならないようにするため. 
\newtheorem{ex}[thm]{Example}
\newtheorem{rem}[thm]{Remark}
\newtheorem{as}[thm]{Assumption}
\newcommand\ND{\newcommand}
\ND\lref[1]{Lemma~\ref{#1}}
\ND\tref[1]{Theorem~\ref{#1}}
\ND\pref[1]{Proposition~\ref{#1}}
\ND\sref[1]{Section~\ref{#1}}
\ND\ssref[1]{Subsection~\ref{#1}}
\ND\aref[1]{Appendix~\ref{#1}}
\ND\rref[1]{Remark~\ref{#1}}
\ND\cref[1]{Corollary~\ref{#1}}
\ND\eref[1]{Example~\ref{#1}}
\ND\fref[1]{Fig.\ {#1} }
\ND\lsref[1]{Lemmas~\ref{#1}}
\ND\tsref[1]{Theorems~\ref{#1}}
\ND\dref[1]{Definition~\ref{#1}}
\ND\psref[1]{Propositions~\ref{#1}}
\ND\rsref[1]{Remarks~\ref{#1}}
\ND\sssref[1]{Subsections~\ref{#1}}
\ND\esref[1]{Examples~\ref{#1}}
\ND\asref[1]{Assumption~\ref{#1}}
\newcommand{\ep}{\varepsilon}
\newcommand{\what}{\widehat}
\newcommand{\h}{\quad}
\newcommand{\dis}{\displaystyle}
\newcommand{\cl}{c\`adl\`ag\ }
\newcommand{\Prob}{\mathbb{P}}
\title{Probabilistic characterization of weakly harmonic maps with respect to non-local Dirichlet forms}
\date{}
\author{Fumiya Okazaki}
\begin{document}
\maketitle
\footnote{Mathematical Institute, Graduate School of Science, Tohoku University, Sendai, Japan}
\footnote{Email address: fumiya.okazaki.q4@dc.tohoku.ac.jp}
\renewcommand{\thepage}{\arabic{page}}
\begin{abstract}
We characterize weakly harmonic maps with respect to non-local Dirichlet forms by Markov processes and martingales. In particular, we can obtain discontinuous martingales on Riemannian manifolds from the image of symmetric stable processes under fractional harmonic maps in a weak sense. Based on this characterization, we also consider the continuity of weakly harmonic maps along the paths of Markov processes and describe the condition for the continuity of harmonic maps by quadratic variations of martingales in some situations containing cases of energy minimizing maps.
\end{abstract}

\flushleft{ {\bf Keywords:} Dirichlet forms; Harmonic maps; Manifold-valued martingales; Jump process; Stochastic calculus on manifolds.}
\flushleft{ {\bf MSC2020 Subject Classifications:} 60J46, 58E20}

\section{Introduction}
Harmonic maps are critical points of the Dirichlet energy defined on the space of maps between two manifolds. There are a lot of studies about harmonic maps from both analytic and geometric points of view. The regularity of harmonic maps is one of the most important problems. In general it is known that harmonic maps have singular points by the non-linearity of the Euler-Lagrange equation obtained from the variational problem for the Dirichlet energy. The theory of partial regularity for harmonic maps was obtained in \cite{SU82, SU84} for energy minimizing maps, and in \cite{Evans, Bethuel} for stationary harmonic maps.

On the other hand, harmonic maps are related to the theory of probability and they also have been studied through stochastic processes. In fact, we can easily show by It\^o's formula that smooth harmonic maps map Brownian motions on domain manifolds to martingales on target manifolds. In \cite{Pic2001}, it was shown that this probabilistic characterization of harmonic maps is valid even for weakly harmonic maps with respect to strongly local Dirichlet forms. Regularity of harmonic maps has been also considered form the view point of the theory of probability. The regularity of harmonic maps with values in spaces with a kind of convex geometry has been shown in \cite{ALT99, Pic2000}.

Recently, fractional harmonic maps, which are critical points of the fractional Dirichlet energy, are also studied. The study of fractional harmonic maps is initiated in \cite{Lio, Lio2} and the regularity theory for $\frac{1}{2}$-harmonic lines was obtained in those articles. The partial regularity for $\frac{1}{2}$-harmonic maps has been shown in \cite{MSire}. Recently, for $\alpha \in (0,2)$, the partial regularity for $\frac{\alpha}{2}$-harmonic maps with values in spheres has been obtained in \cite{MPS}.

In this paper, we first obtain a probabilistic characterization of harmonic maps with respect to regular Dirichlet forms which are not necessarily strongly local. This result is the extension of that of \cite{Pic2001}. Fractional harmonic mas are one of the most typical examples of such harmonic maps. To study harmonic maps for general regular Dirichlet forms by stochastic processes, we will apply the theory of martingales with jumps on a submanifold. Discontinuous martingales on manifolds are first introduced in \cite{Pic1991} and further studied in \cite{Pic1994}. Recently, some properties concerning the convergence of discontinuous martingales have been studied in the author's previous paper \cite{Oka23} for the purpose of applications to harmonic maps. 

To state the characterization of harmonic maps by martingales on submanifold in a general situation, we briefly specify the setting. Let $E$ be a locally compact and separable metric space, $m$ a positive Radon measure with full support on $E$, and $(\mathcal{E},\mathcal{F})$ a regular Dirichlet form on $L^2(E;m)$. For a special standard core $\mathcal{C}$ and an open set $D\subset E$, we denote by $\mathcal{C}_D$ the family of functions in $\mathcal{C}$ whose supports are included in $D$. Let $M$ be a Riemannian submanifold of a higher dimensional Euclidean space $\mathbb{R}^d$. For a map $u=(u^1,\dots,u^d)\colon E\to M \subset \mathbb{R}^d$ such that $u^i\in \mathcal{F}^D_{loc}$ satisfies some conditions, we can consider the following equation:
\begin{gather}
\sum_{i=1}^d\mathcal{E}(u^i,\psi^i)=0,\ \text{for all}\ \psi \in \mathcal{C}_D(\mathbb{R}^d)\ \text{with}\ \psi (z)\in T_{ u(z)}M\ \text{for}\ m\text{-a.e.}\ z\in E,\label{EulerLagrange}
\end{gather}
where
\[
\mathcal{C}_D(\mathbb{R}^d)=\{ \psi=(\psi^1,\dots,\psi^d)\colon E \to \mathbb{R}^d \mid \psi^i\in \mathcal{C}_D\ \text{for each}\ i=1,\dots,d \}.
\]
We can regard \eqref{EulerLagrange} as the Euler-Lagrange equation with respect to the energy functional $\mathcal{E}$. A map $u \colon E\to M$ satisfying \eqref{EulerLagrange} is called a weakly harmonic map on $D$. On the other hand, a regular Dirichlet form determines a symmetric Hunt process on $E$. We define a quasi-harmonic map as a map which maps the symmetric Hunt process to a martingale on $M$. The equivalence of weakly harmonic maps and quasi-harmonic maps was shown in \cite{Pic2001} in the case where $(\mathcal{E},\mathcal{F})$ is strongly local transient regular Dirichlet form and $u$ is in the reflected Dirichlet space. On the other hand, harmonic functions in $\mathcal{F}_{loc}^D$ for general regular Dirichlet form were characterized by stochastic processes in \cite{Chen}. (In this paper, we mainly refer to Chapter 6 of \cite{ChenFuku}.) By combining \cite{Pic2001, Chen}, and the theory of stochastic integrals along CAF's of zero energy developed in \cite{Nakao, CFK, Kuwae, Walsh}, we obtain the non-local version of the result of \cite{Pic2001}. To simplify the statement, we only state the case that a target manifold $M$ is compact in \tref{harmonicmartingale} below, but we will show this kind of result under more general situations in \tsref{harmonicmartingale1} and \ref{harmonicmartingale2} and those results in \sref{harmonicmap} include Proposition 4 of \cite{Pic2001}.
\begin{thm}\label{harmonicmartingale}
Let $M$ be a compact Riemannian submanifold of $\mathbb{R}^d$. We assume that a Borel measurable map $u:E\to M$ is in $\mathcal{F}_{loc}^D(M)$ and quasi-continuous on $D$. Then $u$ is weakly harmonic on $D$ if and only if $u$ is quasi-harmonic on $D$.
\end{thm}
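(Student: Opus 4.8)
The plan is to connect the two notions through the Fukushima-type decomposition of the coordinate processes $u^i(X)$ and the characterization of (possibly discontinuous) martingales on the submanifold $M$ by means of its second fundamental form, bridging the analytic and the probabilistic sides with the stochastic calculus for continuous additive functionals of zero energy. Let $X=(X_t,\mathbb{P}_x)$ be the symmetric Hunt process associated with $(\mathcal{E},\mathcal{F})$ and $\tau_D$ its exit time from $D$; all statements below are understood for $t<\tau_D$ and for quasi-every starting point. Since $u^i\in\mathcal{F}_{loc}^D$ and, $M$ being compact, $u^i$ is bounded, the results of Chapter~6 of \cite{ChenFuku} (see also \cite{Chen}) yield
$$u^i(X_t)-u^i(X_0)=M_t^{[u^i]}+N_t^{[u^i]},$$
with $M^{[u^i]}$ a martingale additive functional locally of finite energy, with continuous part $M^{[u^i],c}$ and purely discontinuous part $M^{[u^i],j}$, and $N^{[u^i]}$ a continuous additive functional locally of zero energy. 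In the non-local case $N^{[u^i]}$ need not have finite variation, so $Y:=u(X)$ is in general a Dirichlet process rather than a semimartingale, and the notion of ``martingale on $M$'' and the functionals below must be formulated at the level of zero-energy additive functionals, in the spirit of \cite{Pic2001} and of \cite{Pic1991,Pic1994,Okazaki}.

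Next I would rephrase quasi-harmonicity as an equation for $N^{[u]}=(N^{[u^1]},\dots,N^{[u^d]})$. By the theory of martingales on submanifolds with jumps (\cite{Pic1991,Pic1994,Okazaki}), $Y=u(X)$ is a martingale on $M$ if and only if, for every $\psi\in\mathcal{C}_D(\mathbb{R}^d)$ with $\psi(z)\in T_{\tilde u(z)}M$ for $m$-a.e.\ $z$ --- exactly the class of test maps appearing in \eqref{EulerLagrange} --- a certain zero-energy additive functional $A^\psi$ vanishes identically; here $A^\psi$ is built by testing $\psi$ against $\sum_i dN^{[u^i]}$, against the brackets of the $M^{[u^i],c}$ through the second fundamental form $\mathrm{I\!I}$ of $M$, and against the jump measure of $Y$ through the chords $u(X_s)-u(X_{s-})$. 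Restricting $\psi$ to be tangent-valued is the structural point: for $M\subset\mathbb{R}^d$ the non-martingale part of $Y$ is generated by $\mathrm{I\!I}$ (continuous part) together with a compensated chord term (jumps), both of which are killed by pairing with a tangent-valued $\psi$, so the vanishing of $A^\psi$ for all such $\psi$ is precisely the martingale property; since $A^\psi$ has zero energy, being additionally a local martingale forces $A^\psi\equiv 0$.

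Finally I would identify $A^\psi$ with the Dirichlet-form pairing. Using the stochastic integration along continuous additive functionals of zero energy --- Nakao's operator and its refinements in \cite{Nakao,CFK,Kuwae,Walsh} --- together with the Revuz correspondence, one shows that for each tangent-valued $\psi\in\mathcal{C}_D(\mathbb{R}^d)$ one has $A^\psi\equiv 0$ (q.e., $\mathbb{P}_x$-a.s.) if and only if $\mathcal{E}(u,\psi)=0$. Under the Beurling--Deny decomposition $\mathcal{E}=\mathcal{E}^{(c)}+(\text{jump part})+(\text{killing part})$, the strongly local part $\mathcal{E}^{(c)}(u,\psi)$ matches the $\mathrm{I\!I}$-contribution to $A^\psi$ as in \cite{Pic2001}, the jump part of $\mathcal{E}$ matches the chord/jump-measure contribution via Nakao's formula, and the killing part matches the contribution of the killing measure; the whole argument is localized along a nest exhausting $\mathcal{F}_{loc}^D$. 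Chaining the equivalences --- $u$ quasi-harmonic on $D$ $\iff$ $A^\psi\equiv 0$ for all tangent-valued $\psi\in\mathcal{C}_D(\mathbb{R}^d)$ $\iff$ $\mathcal{E}(u,\psi)=0$ for all such $\psi$ $\iff$ $u$ weakly harmonic on $D$ --- yields the theorem; along the way one uses the special standard core property of $\mathcal{C}$ and a density/continuity argument to see that the two ``for all $\psi$'' conditions are genuinely equivalent.

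I expect the main obstacle to be this last identification in the presence of jumps: one must isolate and compensate the jump contributions so that the drift of the Dirichlet process $u(X)$ pairs with tangent-valued test maps exactly as $\mathcal{E}(u,\cdot)$ does. The delicate terms are those in which $\psi(u(X_{s-}))$ meets the secant $u(X_s)-u(X_{s-})$ rather than a tangent vector; these must be matched with the jump part of $\mathcal{E}$ precisely through the CAF-integration theory, while keeping all identifications quasi-everywhere and ensuring they localize correctly up to $\tau_D$. The strongly local and killing parts are essentially \cite{Pic2001} combined with Chen's characterization of harmonic functions, so the genuinely new input lies in this jump bookkeeping.
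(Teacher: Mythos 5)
Your overall strategy (Fukushima-type decomposition plus the Nakao--CFK--Kuwae--Walsh calculus for zero-energy CAF's, tested against tangent-valued $\psi$ and matched with the Beurling--Deny decomposition) is the same toolbox the paper uses, but two steps that carry the real weight of the proof are missing or altered. First, quasi-harmonicity is defined in the paper through $M$-valued \emph{semimartingales} with an end point, for which $\int\langle V(X_-),dX\rangle$ must make sense as a semimartingale integral. You propose instead to ``formulate the notion of martingale on $M$ at the level of zero-energy additive functionals'' because $u(Z)$ is a priori only a Dirichlet process; that changes the statement rather than proving it. In the direction ``weakly harmonic $\Rightarrow$ quasi-harmonic'' the paper must, and does, prove that $u(Z)^{\tau_{D_1}}$ is a genuine semimartingale: after writing $u=h+v$ with $v\in\mathcal{F}_e^{D_1}$, it shows via the It\^o formula for Dirichlet processes and a ``continuous purely discontinuous local martingale vanishes'' argument that $N^{[\phi u-v]}$ stopped at $\tau_{D_1}$ coincides with the dual predictable projection $B$ of the exit-jump term of \lref{hAB}, hence is of bounded variation. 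Without this identification your functional $A^\psi$ cannot be compared with the stochastic integrals appearing in the definition of an $M$-valued martingale, and the claimed ``second fundamental form plus compensated chord'' description of the non-martingale part of a manifold-valued Dirichlet process with jumps is not available off the shelf in \cite{Pic1991,Pic1994,Okazaki}; the paper deliberately avoids it and works directly with tangent vector fields.

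Second, the localization is more delicate than ``a nest exhausting $\mathcal{F}^D_{loc}$,'' because in the non-local case the process jumps from $D_1$ to points far outside, where $u$ is not controlled by any local Dirichlet norm. This is exactly why the paper imports Chen's conditions $(A)$ and $(B_2)$ (automatic here since $M$ is compact, by Lemma 6.7.6 of \cite{ChenFuku}) and replaces the naive local Fukushima decomposition of $u^i(Z)$ by the decomposition $u=h_1+h_2+v$, where $h(Z)^{\tau_{D_1}}$ is a square-integrable martingale absorbing the jumps landing outside $D_2$ and $v\in\mathcal{F}_e^{D_1}$ admits an honest Fukushima decomposition. The drift-versus-form identification you defer to ``jump bookkeeping'' is then carried out through the specific identity \eqref{rightdel} for $\int g(Z_{-})\,d\Gamma(M)$, the well-posedness of the stopped integrals (\rref{integralexittimerem}), and the $\mathcal{E}$-density of $\mathcal{C}_D(u^*TM)$ in $\mathcal{F}_e^{D}(u^*TM)$ so that $l\phi\, D\bar f\circ u$ is an admissible tangent-valued test object; the killing part enters through the $M^{[\cdot],k}$ correction in \eqref{rightdel}, not as a separate matching. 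As written, your proposal asserts these matchings rather than establishing them, and precisely at the point you yourself identify as ``the genuinely new input,'' so the argument is incomplete where the paper's proof (\tsref{harmonicmartingale1} and \ref{harmonicmartingale2}) does its work.
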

As a particular case of \tref{harmonicmartingale}, we can obtain a discontinuous martingale on a submanifold by substituting an $\alpha$-symmetric stable process into a map satisfying the Lagrange equation with respect to the fractional Laplacian. In this way, the fractional harmonic map discussed in \cite{Lio, Lio2} can be studied by using stochastic processes.

Next we will consider singularities of harmonic maps. Singularities of harmonic map heat flows with smooth initial data appearing at their explosion times were dealt with through stochastic processes in \cite{Thal1, Thal2}. In this paper, we will deal with singularities of harmonic maps in a weak sense including fractional harmonic maps based on \tref{harmonicmartingale}. In this situation, the continuity of martingales obtained from harmonic maps at time zero can be thought as the continuity of harmonic maps along the paths of Markov processes. This kind of continuity corresponds to the continuity in the fine topology in classical potential theory.
In \cite{Oka23}, it was shown that the equivalent condition of the fine continuity of quasi-harmonic maps for Markov processes with jumps can be described by quadratic variations of martingales on manifolds. By combining \tref{harmonicmartingale} above and Proposition 4.11 of \cite{Oka23}, we can obtain some equivalent conditions of the fine continuity of weakly harmonic maps with respect to general regular Dirichlet forms through martingales on manifolds. However, we note that the fine continuity is much weaker than the continuity in the topology of metric spaces in general. We will remark the equivalence does not hold for general weakly harmonic maps with respect to the Laplacian. One of the counterexamples is the everywhere-discontinuous weakly harmonic map constructed in \cite{Riv95}. However, if we consider a class of weakly harmonic maps with respect to the Laplacian and the fractional Laplacian which satisfy some assumptions regarding tangent maps, we can easily show that the equivalence holds. The precise assumptions and statement will be given in \asref{astangent} and \pref{minimizing}, respectively. Typical examples satisfying these assumptions are energy minimizing maps. %We will remark that our proof of the equivalence of these two continuities is meaningful especially for the case of fractional harmonic maps because we can show it more easily in the case of Laplacian by using classical theorem from the potential theory due to Deny \cite{Deny48}. See \rref{thinness} for details.

We give an outline of the paper. In \sref{Markovprocesses}, we first recall some facts regarding symmetric Markov processes and Dirichlet forms. Mainly we refer to \cite{ChenFuku, FOT}. In addition, we recall the stochastic integral along continuous additive functionals of zero energy. In \sref{harmonicmap}, we characterize harmonic maps with respect to regular Dirichlet forms by martingales on manifolds. In \sref{pf2}, we describe the connection between sinularities of weakly harmonic maps and those of martingales.\\

Throughout this paper, given a topological space $M$, we denote by $C_0(M)$ the set of all continuous functions on $M$ with compact support. In the case that $M$ is a manifold, we denote by $C_0^{\infty}(M)$ the set of all $C^{\infty}$ functions with compact support. For $a, b \in \mathbb{R}$, we abbreviate $\max \{a,\, b\}$ and $\min \{a,\, b \}$ as $a\lor b$ and $a \land b$, respectively. For a stochastic process $H$ and a stopping time $\tau$, we write the stopped process as $H^{\tau}$ defined by
\[
H^{\tau}_t(\omega)=H_{t\land \tau (\omega)}(\omega).
\]
For two random variables $X$, $Y$, we write $X\sim Y$ if they have the same distribution. 
%%%%%%%%%%%%%%%%%%
\section{Preliminaries on Markov processes}\label{Markovprocesses}
In this section, we recall the theory of Markov processes and Dirichlet forms in preparation for the proof of the main theorem. In particular, we focus on the stochastic integral along CAF's of zero energy introduced in \cite{Nakao} and further studied in \cite{CFK, Kuwae, Walsh}. As for the general theory of Dirichlet forms, see \cite{ChenFuku, FOT} for details.

Let $E$ be a locally compact and separable metric space. We add a point $\Delta$ to $E$ and define $E_{\Delta}:=E\cup \{\Delta \}$. Given $f\in \mathcal{B}(E)$, we extend the domain of $f$ to $E_{\Delta}$ by $f(\Delta)=0$. Let $m$ be a positive Radon measure with full support on $E$. Let $\mathcal{F}$ be a dense subspace of $L^2(E;m)$ and $\mathcal{E}\colon \mathcal{F} \times \mathcal{F} \to \mathbb{R}$ a non-negative definite symmetric quadratic form. We set
\[
\mathcal{E}_{\alpha}(u,v):=\mathcal{E}(u,v)+\alpha \langle u,v \rangle_{L^2}
\]
for $u,v\in \mathcal{F}$ and $\alpha >0$. The pair $(\mathcal{E}, \mathcal{F})$ is called a Dirichlet form if the space $(\mathcal{F}, \mathcal{E}_1)$ is a Hilbert space and it holds that $(u\lor 0)\land 1 \in \mathcal{F}$ and
\[
\mathcal{E}\left( (u\lor 0)\land 1, (u\lor 0)\land 1 \right)\leq \mathcal{E}(u,u)
\]
for any $u\in \mathcal{F}$. A Dirichlet form $(\mathcal{E},\mathcal{F})$ is said to be regular if $\mathcal{F}\cap C_0(E)$ is $\mathcal{E}_1$-dense in $\mathcal{F}$ and $\|\cdot \|_{\infty}$-dense in $C_0(E)$. 
For a regular Dirichlet form, the extended Dirichlet space $\mathcal{F}_e$ is defined as the family of equivalence classes of Borel functions $u\colon E\to \mathbb{R}$ with respect to the $m$-a.e. equality such that $|u|<\infty\ m$-a.e. and there exists an $\mathcal{E}$-Cauchy sequence $\{u_k\}_{k=1}^{\infty}$ in $\mathcal{F}$ such that $\displaystyle \lim_{k\to \infty}u_k=u,\ m$-a.e. For $u\in \mathcal{F}_e$, we can set
\[
\mathcal{E}(u,u):=\lim_{k\to \infty}\mathcal{E}(u_k,u_k),
\]
where $\{u_k\}_{k=1}^{\infty}$ is an $\mathcal{E}$-approximate sequence in $\mathcal{F}$ of $u$. For a regular Dirichlet form $(\mathcal{E},\mathcal{F})$, there exists an $m$-symmetric Hunt process $(\Omega, \{Z\}_{t\geq 0},\{ \theta_t\}_{t\geq 0}, \zeta, \{ \mathbb{P}_z\}_{z\in E_{\Delta}})$ on $E$ corresponding to $(\mathcal{E},\mathcal{F})$, where $\theta_t:\Omega \to \Omega$ is the shift operator satisfying for all $\omega \in \Omega$,
\begin{align*}
Z_s\circ \theta_t(\omega)&=Z_{s+t}(\omega),\ \theta_0\omega=\omega,\\
\theta_{\infty}\omega(t)&=\Delta,\ \text{for all}\ t\geq 0 ,
\end{align*}
$\zeta$ is a life time of $Z$ and $\mathbb{P}_z$ is the distribution of $\{Z_t\}$ stating at $z$. Set
\begin{align*}
\mathcal{F}^0_{\infty}&=\sigma(Z_s;\ s< \infty),\\
\mathcal{F}^0_t&=\sigma(Z_s;\ s\leq t).
\end{align*}
For a $\sigma$-finite measure $\mu$, we set
\[
\mathbb{P}_{\mu}(\Lambda)=\int_{E_{\Delta}}\mathbb{P}_z(\Lambda)\, \mu(dz),\ \Lambda \in \mathcal{F}^0_{\infty}.
\]
In particular, if $\mu$ is a probability measure on $E$, $\mathbb{P}_{\mu}$ is a probability measure on $\mathcal{F}^0_{\infty}$. Denote the $\mathbb{P}_{\mu}$-completion of $\mathcal{F}^0_{\infty}$ by $ \mathcal{F}^{\mu}_{\infty}$. Set
\[
\mathcal{F}^{\mu}_t=\sigma(\mathcal{F}^0_t,\mathcal{N}_{\mu}),
\]
where $\mathcal{N}_{\mu}$ is the family of all $\mathbb{P}_{\mu}$-null sets in $\mathcal{F}^{\mu}_{\infty}$. Denote the set of probability measures on $E_{\Delta}$ by $\mathcal{P}(E_{\Delta})$ and let
\[
\mathcal{F}^Z_t=\bigcap_{\mu \in \mathcal{P}(E_{\Delta})} \mathcal{F}^{\mu}_t,\ t\in [0,\infty].
\]
$\{ \mathcal{F}^Z_t \}_{t\geq 0}$ is called the minimum augmented admissible filtration of $Z$. A subset $A\subset E_{\Delta}$ is called a nearly Borel set if for every $\mu \in \mathcal{P}(E_{\Delta})$, there exist Borel sets $A_0$, $A_1$ in $E_{\Delta}$ such that
\[
A_0\subset A\subset A_1,\ \mathbb{P}_{\mu}(Z_t\in A_1\backslash A_0,\ \text{for some }t\geq 0)=0.
\]
For a subset $A\subset E_{\Delta}$, define the random time $\sigma_A$ by
\begin{align*}
\sigma_{A}(\omega)&:=\inf \{ t>0\mid Z_t(\omega)\in A\},\ \omega \in \Omega,\\
\tau_A(\omega)&:=\inf \{ t>0\mid Z_t(\omega )\notin A\},\ \omega \in \Omega.
\end{align*}
It is known that if $A$ is a nearly Borel set, $\sigma_A$ and $\tau_A$ are $\{ \mathcal{F}^Z_t \}$-stopping times.
A subset $A\subset E$ is said to be $m$-polar if there exists a nearly Borel set $A_2$ such that $A\subset A_2$ and
\begin{gather*}
\mathbb{P}_m(\sigma _{A_2}<\infty)=\int_E\mathbb{P}_z(\sigma_{A_2}<\infty)\, m(dz)=0.
\end{gather*}
The term q.e. stands for ``except for an $m$-polar set.'' A positive Radon measure $\mu$ on $E$ is said to be of finite energy integral if there exists $C>0$ such that
\[
\int_E|u(z)|\, \mu(dz)\leq C\sqrt{\mathcal{E}_1(u,u)},\ \text{for all}\ u\in \mathcal{F}\cap C_0(E).
\]
If $\mu$ is a positive Radon measure on $E$ of finite energy integral, then for each $\alpha >0$, there exists a unique function $U_{\alpha}\mu \in \mathcal{F}$ such that
\[
\mathcal{E}_{\alpha}(U_{\alpha}\mu,u)=\int_Eu(z)\, \mu(dz),\ \text{for all}\ u\in \mathcal{F}\cap C_0(E).
\]
$U_{\alpha}\mu$ is called an $\alpha$-potential. We denote by $S_0$ the family of all positive Radon measures of finite energy integrals. We set a subset $S_{00}$ of $S_0$ as
\begin{gather}\label{S00}
S_{00}:=\{ \mu \in S_0 \mid \mu(E)<\infty,\ \| U_1\mu \|_{\infty}<\infty \}.
\end{gather}
We use the abbreviation AF (resp. CAF) for additive functional (resp. continuous additive functional). Each PCAF $A$ determines a measure $\mu_A$ on $E$ called Revuz measure which is characterized by
\[ 
\lim_{t\to 0}\frac{1}{t}\mathbb{E}_m\left[\int_0^tf(Z_t)\, dA_t \right]=\int_Ef(z)\, d\mu_A(z),\ f\in \mathcal{B}_+(E),
 \]
where $\mathcal{B}_+(E)$ is the set of all non-negative Borel functions on $E$. We denote the energy of an AF $A$ by
\[
\mathbf{e}(A)=\lim_{t\searrow 0}\frac{1}{2t}\mathbb{E}_m\left[A_t^2 \right]
\]
and the mutual energy of AF's $A$, $B$ by
\[
\mathbf{e}(A,B)=\lim_{t\searrow 0}\frac{1}{2t}\mathbb{E}_m\left[A_tB_t \right] .
\]
We set
\begin{align*}
\mathcal{M}&:= \{ M \mid M\ \text{is an AF},\ \mathbb{E}_z\left[ M_t^2 \right] <\infty,\ \mathbb{E}_z\left[ M_t \right] =0,\ \text{for q.e.}\ z\in E,\ t\geq 0\},\\
\mathring{\mathcal{M}}&:= \{ M \in \mathcal{M} \mid \mu_{\langle M \rangle}(E) < \infty \},\\
\mathcal{N}_c&:= \{ N \mid N\ \text{is a CAF},\ \mathbf{e}(N)=0,\ \mathbb{E}_z\left[ |N_t| \right]<\infty,\ \text{q.e.}\ z  \}.
\end{align*}
Each additive functional in $\mathcal{M}$ is called a martingale additive functional (MAF). For details about MAF's, see \cite{FOT, Fuku}. For $u\in \mathcal{F}_e$, set $A^{[u]}_t:=\tilde u(X_t)-\tilde u(X_0)$, where $\tilde u$ is a quasi-continuous modification of $u$. Then $A^{[u]}$ is an additive functional of $Z$ with finite energy. Moreover, by \cite{FOT, Fuku}, there exists $M^{[u]}\in \mathring{\mathcal{M}}$ and $N^{[u]}\in \mathcal{N}_c$ such that
\[
A^{[u]}_t=M^{[u]}_t+N^{[u]}_t,\ \text{for all}\ t\geq 0,\ \mathbb{P}_z\text{-a.s. q.e.}\ z\in E 
\]
and such a decomposition is unique. For $u\in \mathcal{F}_e$, the MAF $M^{[u]}$ can be decomposed into continuous part, jump part, and killing part as follows. Since $M^{[u]}$ is a $P_z$-martingale for q.e. $z\in E$, the MAF $M^{[u]}$ can be written as the sum of its continuous martingale part $M^{[u],c}$ and purely discontinuous martingale part $M^{[u],d}$ and we can construct $M^{[u],c}$ and $M^{[u],d}$ as MAF's of $Z$. We further set
\begin{align*}
M^{[u],k}_t&:= -u(Z_{\zeta -})\mathbf{1}_{\{t\geq \zeta\}} + \int_0^t u(Z_s)\, N(Z_s, \Delta)\, dH_s,\\
M^{[u],j}_t&:= M^{[u],d}_t-M^{[u],k}_t,
\end{align*}
where $(N,H)$ is the L\'evy system of $Z$, which is defined by a pair of a kernel $N(z, dw)$ and a PCAF $H$ of $Z$ satisfying
\begin{align}\label{Levysystem}
\mathbb{E}_z \left[ \sum_{0<s\leq t}Y_sf(Z_{s-},Z_s) \right] =\mathbb{E}_z\left[ \int_0^{\infty}Y_s\left( \int_{E_{\Delta}}f(Z_s,w)\, N(Z_s,dw) \right) \, dH_s \right]
\end{align}
for any non-negative predictable process $\{Y_s\}$ and any $f\in \mathcal{B}_{+}(E_{\Delta} \times E_{\Delta})$ with $f(w,w)=0$ for all $w\in E$. Then $M^{[u],j}, M^{[u],k}\in \mathring{\mathcal{M}}$ and it holds that
\begin{align*}
&M^{[u]}=M^{[u],c}+M^{[u],j}+M^{[u],k},\\
&\langle M^{[u],c},M^{[u],j}\rangle = \langle M^{[u],j},M^{[u],k}\rangle = \langle M^{[u],k},M^{[u],c}\rangle = 0.
\end{align*}
We set
\[
J(dzdw):=N(z,dw)d\mu_H(dz),\ k(dz):=N(z,\Delta)d\mu_H(dz).
\]
Since it holds that 
\[
\mathcal{E}(u,v)=\mathbf{e}\left( A^{[u]},A^{[v]} \right)+\frac{1}{2}\int_Euv\, dk,\ u,v\in \mathcal{F}_e,
\]
the above decomposition of $M^{[u]}$ yields the following Beurling-Deny decomposition: For $u,v\in \mathcal{F}_e$,
\begin{align*}
\mathcal{E}(u,v)&=\frac{1}{2}\mu_{\langle M^{[u]}+M^{[u],k},M^{[v]}\rangle}(E)\\
&=\frac{1}{2}\mu^c_{\langle u,v \rangle} (E) + \frac{1}{2}\int_{E\times E\, \backslash \, \mathrm{diag}(E)} (u(z)-u(w))(v(z)-v(w))\, J(dzdw)\\
&\h + \int_E u(z)v(z) \, k(dz),
\end{align*}
where $\mu_{\langle u,v \rangle}^c$ is the Revuz measure of $\langle M^{[u],c},M^{[v],c}\rangle$ and $\mathrm{diag}(E)$ is a diagonal set of $E\times E$. We denote the family of purely discontinuous MAF's of finite energy by $\mathring{\mathcal{M}^d}$. In \cite{Kuwae}, it was shown that there is a one-to-one correspondence between $\mathring{\mathcal{M}^d}$ and the family of jump functions defined by
\begin{align*}
\mathring{\mathcal{J}}:=\{& \phi:E_{\Delta} \times E_{\Delta} \to [0,\infty) \mid \phi:\text{Borel measurable},\ \phi(z,z)=0\ \text{for all}\ z\in E_{\Delta},\\
&N(\phi^2)\mu_H\ \text{is a finite Radon measure which is null on each}\ m\text{-polar set} \}.
\end{align*}
For $\phi, \psi \in \mathring{\mathcal{J}}$, we denote $\phi \sim \psi$ if $\phi=\psi \ N(z,dw)\mu_H(dz)$-a.e. on $E\times E_{\Delta}$. Lemma 2.5 of \cite{Kuwae} guarantees that for all $\phi \in \mathring{\mathcal{J}}$, there exists $M\in \mathring{\mathcal{M}^d}$ such that
\[
M_t-M_{t-}=\phi (Z_{t-},Z_t),\ t\geq 0,\ P_z\text{-a.s. q.e.}\ z\in E
\]
and this is a one-to-one correspondence between $\mathring{\mathcal{J}}/\sim$ and $\mathring{\mathcal{M}^d}$. In particular, for $M\in \mathring{\mathcal{M}^d}$, there exists $K\in \mathring{\mathcal{M}^d}$ such that
\begin{gather}\label{jumpreverse}
K_t-K_{t-}=-\mathbf{1}_{E\times E}(\phi + \bar \phi) (Z_{t-},Z_t),
\end{gather}
where $\phi$ is the jump function corresponding to $M$ and $\bar \phi (z,w):=\phi (w,z)$ since $\phi + \bar \phi \in \mathring{\mathcal{J}}$.

Now we can define the stochastic integral along CAF's of zero energy. We set
\begin{align*}
\mathcal{N}_c^*:=& \left\{N + \int_0^{\cdot}g(Z_{s-})\, ds\mid N\in \mathcal{N}_c,\ g\in L^2(M;m) \right\},\\
\widetilde{\mathbb{A}}_c^+:=&\left\{A\mid \text{PCAF of}\ Z,\ \mu_A(E)<\infty \right\},\\
\widetilde{\mathcal{N}_c}:=& \left\{N + A - B\mid N \in \mathcal{N}_c^*,\ A,B\in \widetilde{\mathbb{A}}_c^+ \right\}.
\end{align*}
By Lemma 3.2 of \cite{Nakao}, we can define a linear operator $\gamma \colon \mathring{\mathcal{M}}\to \mathcal{F}$ and $\Gamma \colon \mathring{\mathcal{M}}\to \mathcal{N}_c^*$ as follows: For each $M\in \mathring{\mathcal{M}}$, there exists a unique $w\in \mathcal{F}$ such that
\[
\mathcal{E}_1(w,u)=\frac{1}{2}\mu_{\langle M^{[u]}+M^{[u],k},M\rangle}(E)\ \text{for all}\ u\in \mathcal{F}.
\]
This $w$ is denoted by $\gamma (M)$ and $\Gamma$ is defined by
\[
\Gamma (M)_t:= N^{[\gamma (M)]}_t-\int_0^t \gamma (M)(Z_s)\ ds,\ \text{for each}\ M\in \mathring{\mathcal{M}}.
\]
Let $M\in \mathring{\mathcal{M}}$ and $\phi \in \mathring{\mathcal{J}}$ its jump function. For $g\in \mathcal{F}_e\cap L^2(E;\mu_{\langle M \rangle})$, the stochastic integral $\int g(Z)\, d\Gamma (M) \in \widetilde{\mathcal{N}_c}$ is defined by
\[
\int_0^tg(Z_{s-})\, d\Gamma (M)_s:=\Gamma (g*M)-\frac{1}{2}\langle M^{[g]}+M^{[g],k},M^c+M^j+K \rangle,\ t\in [0,\infty ),
\]
where $(g* M)_t=\int_0^t g(Z_{s-})\, dM_s$ and $K\in \mathring{\mathcal{M}}^d$ constructed by \eqref{jumpreverse} (cf. (3.4) of \cite{Kuwae}). By \cite{Nakao}, the CAF $\Gamma (M)$ can be characterized by
\begin{gather}
\lim_{t\to 0}\frac{1}{t}\mathbb{E}_{lm}\left[ \Gamma (M)_t\right]=-\frac{1}{2}\mu_{\langle M^{[l]}+M^{[l],k}, M\rangle}(E)\ \text{for all}\ l\in \mathcal{F}_b,\label{Nakaooperator}
\end{gather}
where $\mathcal{F}_b=\mathcal{F}\cap L^{\infty}(E;m)$.
\begin{lem}
For $M\in \mathring{\mathcal{M}}$ and $g, l\in \mathcal{F}_b$, it holds that
\begin{gather}
\lim_{t\to 0}\frac{1}{t}\mathbb{E}_{lm}\left[ \int_0^tg(Z_{s-})\, d\Gamma (M)_s \right]=-\frac{1}{2}\mu_{\langle M^{[lg]}+M^{[lg],k}, M\rangle}(E).\label{rightdel}
\end{gather}
\end{lem}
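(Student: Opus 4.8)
The plan is to expand the Nakao integral by its definition, split the expectation $\mathbb{E}_{lm}[\,\cdot\,]$ into the two resulting pieces, evaluate each by the Revuz correspondence together with \eqref{Nakaooperator}, and then check that the two pieces recombine into the right-hand side of \eqref{rightdel}. Since $g\in\mathcal{F}_b$ and $\mu_{\langle M\rangle}(E)<\infty$, we have $g\in\mathcal{F}_e\cap L^2(E;\mu_{\langle M\rangle})$ and $g*M\in\mathring{\mathcal{M}}$ (because $\mu_{\langle g*M\rangle}=g^2\cdot\mu_{\langle M\rangle}$ is a finite measure), so $\int_0^t g(Z_{s-})\,d\Gamma(M)_s$ is defined and, by definition, equals $\Gamma(g*M)_t-\frac{1}{2}\langle M^{[g]}+M^{[g],k},M^c+M^j+K\rangle_t$.

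For the term $\Gamma(g*M)$, I apply \eqref{Nakaooperator} with $g*M$ in place of $M$; since $\langle M^{[l]}+M^{[l],k},g*M\rangle_t=\int_0^t g(Z_{s-})\,d\langle M^{[l]}+M^{[l],k},M\rangle_s$, the Revuz measure of this covariation is $g\cdot\mu_{\langle M^{[l]}+M^{[l],k},M\rangle}$, so
\[
\lim_{t\to 0}\frac{1}{t}\mathbb{E}_{lm}[\Gamma(g*M)_t]=-\frac{1}{2}\int_E g\,d\mu_{\langle M^{[l]}+M^{[l],k},M\rangle}.
\]
For the term $\langle M^{[g]}+M^{[g],k},M^c+M^j+K\rangle$, which is a CAF equal to a difference of two PCAF's with finite Revuz measures (by polarization, all MAF's involved lying in $\mathring{\mathcal{M}}$), the Revuz correspondence gives
\[
\lim_{t\to 0}\frac{1}{t}\mathbb{E}_{lm}[\langle M^{[g]}+M^{[g],k},M^c+M^j+K\rangle_t]=\int_E l\,d\mu_{\langle M^{[g]}+M^{[g],k},M^c+M^j+K\rangle}.
\]
Both short-time limits exist, so \eqref{rightdel} is reduced to the scalar identity
\[
\int_E g\,d\mu_{\langle M^{[l]}+M^{[l],k},M\rangle}+\int_E l\,d\mu_{\langle M^{[g]}+M^{[g],k},M^c+M^j+K\rangle}=\mu_{\langle M^{[lg]}+M^{[lg],k},M\rangle}(E).
\]

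To prove this identity I would split every bracket into its continuous, jump and killing parts, using their mutual orthogonality, that $K\in\mathring{\mathcal{M}}^d$ has jumps carried by $E\times E$, and that $M^{[g],k}$ has no covariation with $M^c+M^j+K$. The continuous and killing contributions match directly from the product rule applied to $\tilde l(Z_s)\tilde g(Z_s)$, which gives $M^{[lg],c}=l*M^{[g],c}+g*M^{[l],c}$ and $M^{[lg],k}=g*M^{[l],k}=l*M^{[g],k}$ (the latter using $\widetilde{lg}(Z_{\zeta-})=\tilde l(Z_{\zeta-})\tilde g(Z_{\zeta-})$ and the explicit form of $M^{[u],k}$). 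The jump part is the main obstacle, because it is precisely there that the asymmetry between the full $M$ in the first integral and the truncated $M^c+M^j+K$ in the second is essential: writing the jumps of $M^{[u],j}$ as $\tilde u(Z_t)-\tilde u(Z_{t-})$, letting $\phi$ be the jump function of $M$, and computing the relevant Revuz measures through the Levy system of $Z$ (with $J(dzdw)=N(z,dw)\mu_H(dz)$), the first integral contributes $\int_{E\times E}\tilde g(z)(\tilde l(w)-\tilde l(z))\,\phi(z,w)\,J(dzdw)$, while the jump function of $K$ from \eqref{jumpreverse}, after using the symmetry $J(dzdw)=J(dwdz)$, turns the $\langle M^{[g],j},M^j+K\rangle$-part of the second integral into $\int_{E\times E}\tilde l(w)(\tilde g(w)-\tilde g(z))\,\phi(z,w)\,J(dzdw)$; the pointwise identity $\tilde g(z)(\tilde l(w)-\tilde l(z))+\tilde l(w)(\tilde g(w)-\tilde g(z))=\widetilde{lg}(w)-\widetilde{lg}(z)$ then identifies the sum with $\mu_{\langle M^{[lg],j},M^j\rangle}(E)$, completing the proof. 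The delicate point is this jump computation, and within it the verification that the correction term $K$ built in \eqref{jumpreverse} is exactly what the symmetrization of $J$ demands; the continuous and killing parts, and the bookkeeping of vanishing cross-brackets, are routine.
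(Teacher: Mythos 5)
Your proposal is correct and follows essentially the same route as the paper: expand $\int_0^t g(Z_{s-})\,d\Gamma(M)_s$ by its definition, apply \eqref{Nakaooperator} to $\Gamma(g*M)$ and the Revuz correspondence to the bracket correction, and recombine through the product rules for the continuous, jump and killing parts. The only difference is that you verify the jump identity (the role of $K$ and the symmetry of $J$) directly from the L\'evy system, whereas the paper cites it, together with the derivation property, from Lemma 3.1 of \cite{Nakao}.
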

\begin{proof}
By derivation property, it holds that
\[
d\mu_{\langle M^{[lg],c},M^c\rangle}(z)=\tilde l (z)\, d\mu_{\langle M^{[g],c},M^c\rangle }(z)+\tilde g (z) \, d\mu_{\langle M^{[l],c},M^c\rangle}(z).
\]
Moreover, in view of Lemma 3.1 of \cite{Nakao}, we have
\begin{align*}
\mu_{\langle M^{[lg],j},M^j\rangle}(E)&=\int_E \tilde l\, d \mu_{\langle M^{[g],j},M^j+K\rangle}+\int_E \tilde g\, d\mu_{\langle M^{[l],j},M^j \rangle},\\
d\mu_{\langle M^{[lg],k},M^k\rangle}(z)&=\tilde l (z)\, d\mu_{\langle M^{[g],k},M^k\rangle}(z). 
\end{align*}
Therefore
\begin{align*}
\lim_{t\to 0}\frac{1}{t}\mathbb{E}_{lm}\left[ \Gamma (g*M)_t\right]&=-\frac{1}{2}\mu_{\langle M^{[l]}+M^{[l],k}, g*M \rangle}(E)\\
&= -\frac{1}{2}\int_Eg(z)\, \mu_{\langle M^{[l]}+M^{[l],k}, M \rangle}(dz)\\
&= -\frac{1}{2}\left( \int_Eg(z)\, \mu_{\langle M^{[l],c}, M^c \rangle}(dz)\right. + \int_Eg(z)\, \mu_{\langle M^{[l],j}, M^j \rangle}(dz)\\
&\h +\left. 2\int_Eg(z)\, \mu_{\langle M^{[l],k}, M^k \rangle}(dz)\right) \\
&= -\frac{1}{2}\left( \mu_{\langle M^{[lg],c},M^c\rangle}(E)-\int_E l\, d\mu_{\langle M^{[g],c},M^c \rangle}+\mu_{\langle M^{[lg],j},M^j\rangle}(E) \right. \\
&\h \left. -\int_E ld\mu_{\langle M^{[g],j},M^j+K\rangle} +2\mu_{\langle M^{[lg],k},M^k \rangle}(E) \right)\\
&= -\frac{1}{2}\mu_{\langle M^{[lg]}+M^{[lg],k},M\rangle}(E)+\frac{1}{2}\int_El\, d\mu_{\langle M^{[g],c}+M^{[g],j},M^c+M^j+K\rangle}.
\end{align*}
Thus we obtain \eqref{rightdel}.
\end{proof}
For an open set $D\subset E$, we set
\begin{align*}
\mathcal{F}^D&:=\{ u\in \mathcal{F} \mid \tilde u=0,\ \text{q.e. on}\ E\, \backslash \, D\},\\
\mathcal{F}^D_{loc}&:=\{u:E\to \mathbb{R}\mid \text{for every relatively compact open subset}\ D_1\subset D,\\
&\hspace{2.7cm} \text{there exists}\ g\in \mathcal{F}^D\ \text{such that}\ u=g,\ m\text{-a.e. on}\ D_1\}.
\end{align*}
We will define the stochastic integral $\int_0^{t\land \tau_{D_1}} g(Z_-)\, d\Gamma (M)$, where $D_1\subset D$ is a relatively compact open set such that $\overline{D_1}\subset D$, $g$ is a locally bounded function in $\mathcal{F}^D_{loc}$ and $M\in \mathring{\mathcal{M}}$.
\begin{lem}\label{integralPCAF}
Let $M\in \mathring{\mathcal{M}}$ and $D$ a relatively compact open set of $E$. Suppose that there exists a  CAF $A$ of bounded variation of $Z$ such that $|\mu_A|(\overline{D})<\infty$ and
\[
\Gamma (M) = A,\ t\leq \tau_{D}.
\]
Then for $g\in \mathcal{F}_b$, it holds that
\[
\int _0^{t\wedge \tau_D}g(Z_{s-})\, d\Gamma (M)_s=\int_0^{t\wedge \tau_D}g(Z_{s-})\, dA_s,\ \mathbb{P}_z\text{-a.s. q.e.}\ z\in D,
\]
where the right hand side is the Lebesgue-Stieltjes integral along $A$.
\end{lem}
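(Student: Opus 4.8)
The plan is to recognise both sides as continuous additive functionals of $Z$ on $[0,\tau_D)$ and to pin them down through the short-time expectation identity \eqref{rightdel}, localised to $D$. A convenient first reduction: since $\tau_D$ is the exit time of an open set and $A$ is a CAF of bounded variation with $\Gamma(M)_t=A_t$ for $t\le\tau_D$, the stopped functional $A^{\tau_D}$ is again a CAF of bounded variation, its total variation is a PCAF whose Revuz measure is the total variation of $\mu_A$ restricted to the relatively compact set $D$, hence finite, so $A^{\tau_D}\in\widetilde{\mathbb A}_c^+-\widetilde{\mathbb A}_c^+$; moreover $\int_0^{t}g(Z_{s-})\,dA^{\tau_D}_s=\int_0^{t\wedge\tau_D}g(Z_{s-})\,dA_s$ and $A^{\tau_D}_t=\Gamma(M)_{t\wedge\tau_D}$. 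So I may assume from the start that $A$ does not move after $\tau_D$.

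Next I would compute the two short-time limits $\lim_{t\downarrow0}\frac1t\mathbb E_{lm}[\,\cdot\,]$ for test functions $l\in\mathcal F_b$ with $\tilde l=0$ q.e. on $E\setminus D$; then $lg\in\mathcal F_b$ (as $\mathcal F_b$ is an algebra), $\widetilde{lg}=0$ q.e. off $D$, and $\mathbb P_{lm}(\tau_D<t)\to0$ and $\mathbb P_{(lg)m}(\tau_D<t)\to0$ since $\tau_D>0$ q.e. on $D$. For the left-hand side the defining formula places $\int_0^{\cdot}g(Z_{s-})\,d\Gamma(M)_s$ in $\widetilde{\mathcal N_c}$, and \eqref{rightdel} evaluates its limit to $-\tfrac12\mu_{\langle M^{[lg]}+M^{[lg],k},M\rangle}(E)$; a routine stopping estimate — using $\mathbb P_{lm}(\tau_D<t)\to0$, the strong Markov property at $\tau_D$, and the finiteness of the Revuz measures entering the decomposition in $\mathcal N_c^*$ and $\widetilde{\mathbb A}_c^+$ — shows that stopping at $\tau_D$ alters this limit by $o(t)$. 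For the right-hand side, $\int_0^{\cdot}g(Z_{s-})\,dA_s$ is a CAF of bounded variation whose signed Revuz measure on $D$ is $\tilde g\,\mu_A$ (since $A$ is continuous, $Z_{s-}$ and $Z_s$ may be interchanged in $\int(\cdot)\,dA_s$, the jump times being a $dA$-null countable set), so $\lim_{t\downarrow0}\frac1t\mathbb E_{lm}[\int_0^t g(Z_{s-})\,dA_s]=\lim_{t\downarrow0}\frac1t\mathbb E_{(lg)m}[A_t]$; and since $A_t=\Gamma(M)_{t\wedge\tau_D}$ and $(lg)m$ is carried by $D$, the same stopping estimate together with \eqref{Nakaooperator} applied with $lg$ in place of $l$ identifies this limit with $-\tfrac12\mu_{\langle M^{[lg]}+M^{[lg],k},M\rangle}(E)$ as well. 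Thus the two stopped integrals have equal short-time expectations against every such $l$.

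Finally I would invoke the localised uniqueness for continuous additive functionals: if $B$ is a continuous additive functional of $Z$ that, on $[0,\tau_D)$, is the sum of an element of $\mathcal N_c^*$ and a signed bounded-variation CAF, and if $\lim_{t\downarrow0}\frac1t\mathbb E_{lm}[B_t]=0$ for every $l\in\mathcal F_b$ with $\tilde l=0$ q.e. on $E\setminus D$, then $B_t=0$ for all $t<\tau_D$, $\mathbb P_z$-a.s. for q.e. $z\in D$. This is the uniqueness part of Lemma~3.2 of \cite{Nakao} combined with the Revuz correspondence for the part process on $D$, whose Dirichlet form $(\mathcal E,\mathcal F^D)$ is regular since $D$ is relatively compact and open: a signed bounded-variation CAF whose Revuz measure vanishes on $D$ does not increase while $Z$ stays in $D$, hence vanishes on $[0,\tau_D)$, while the zero-energy part is killed by Nakao's uniqueness tested against $\mathcal F_b\cap\mathcal F^D$. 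Applying this to the difference of the two stopped integrals produced in the previous step yields the asserted identity.

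I expect the last step — the localised uniqueness, and in particular the assertion that testing against functions supported in $D$ already determines such an additive functional on $[0,\tau_D)$ — to be the main point, this is where the relative compactness of $D$ and the regularity of $(\mathcal E,\mathcal F^D)$ enter. The accompanying stopping estimates are routine in spirit but constitute the one place where the possibility of $Z$ leaving $D$ by a jump before time $t$ must be handled with a little care; reducing to $A^{\tau_D}$ and exploiting the finiteness of the Revuz measures of the PCAF parts of the Nakao integral is what makes that control clean.
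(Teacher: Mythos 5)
Your proposal is correct and follows essentially the same route as the paper: test against $l\in\mathcal{F}_b$ vanishing q.e.\ off $D$, identify the short-time limits of both the Nakao integral (via \eqref{rightdel}) and the Stieltjes integral (via the Revuz correspondence and \eqref{Nakaooperator} applied to $lg$) with $-\tfrac12\mu_{\langle M^{[lg]}+M^{[lg],k},M\rangle}(E)=\int_E lg\,d\mu_A$, and conclude by the uniqueness of CAFs determined by such limits (the paper cites Lemma 5.4.4 of \cite{FOT} and Theorem 2.2 of \cite{Nakao}, which is the same principle as your localised uniqueness step). The extra scaffolding you add (reduction to $A^{\tau_D}$, explicit stopping estimates) only makes explicit what the paper leaves implicit.
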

\begin{proof}
Take any $l\in \mathcal{F}^D_b$. Then by \eqref{Nakaooperator}, it holds that
\begin{align*}
\lim_{t\to 0}\frac{1}{t}\mathbb{E}_{lgm}\left[ \Gamma (M)_t\right]=-\frac{1}{2}\mu_{\langle M^{[lg]}+M^{[lg],k},M \rangle}(E).
\end{align*}
On the other hand, it holds that
\[
\lim_{t\to 0}\frac{1}{t}\mathbb{E}_{l gm}\left[ \Gamma (M)_t\right]=\int_E (l g) (z) \mu_A(dz)
\]
by Lemma 5.4.4 of \cite{FOT}, Theorem 2.2 of \cite{Nakao}, and $|\mu_A|(\overline{D})<\infty$. Thus we obtain
\begin{align*}
\lim _{t\to 0}\frac{1}{t}\mathbb{E}_{lm}\left[ \int _0^t g(Z_{s-})\, d\Gamma (M)_s \right]= \int_E (lg) (z) \mu_A(dz).
\end{align*}
Therefore by Lemma 5.4.4 of \cite{FOT} and Theorem 2.2 of \cite{Nakao}, it holds that
\begin{gather*}
\int _0^{t\wedge \tau_{D}}g(Z_{s-})\, d\Gamma (M)_s=\int_0^{t\wedge \tau_{D}}g(Z_{s-})\, dA_s.
\end{gather*}
Thus we obtain the desired assertion.
\end{proof}
For an open set $D\subset E$, we set
\begin{gather*}
\mathcal{E}^D(u,v):=\mathcal{E}(u,v),\ \text{for}\ u,v\in \mathcal{F}^D.
\end{gather*}
Then $(\mathcal{E}^D,\mathcal{F}^D)$ is a regular Dirichlet form on $L^2(D; \mathbf{1}_D m)$. The symmetric Hunt process $Z^D$ corresponding to $(\mathcal{E}^D,\mathcal{F}^D)$ can be obtained as follows: We let for $\omega \in \Omega$,
\begin{gather*}
Z_t^D:=
\begin{cases}
Z_t(\omega),\ &0\leq t< \tau_D(\omega),\\
\Delta,\ &t\geq \tau_D(\omega),
\end{cases}
\end{gather*}
and $\zeta^D(\omega):=\tau_D(\omega)$. Define the shift operator $\theta^D_t$ on $\Omega$ by
\begin{gather*}
\theta^D_t\omega:=
\begin{cases}
\theta_t \omega,\ &t<\tau_D(\omega),\\
\omega_{\Delta},\ &t\geq \tau_D(\omega),
\end{cases}
\end{gather*}
where $\omega_{\Delta} \in \Omega$ such that $Z_t(\omega_{\Delta})=\Delta$ for all $t\geq 0$. Then $(\Omega, \{Z_t^D\}_{t\geq 0}, \{\theta^D_t\}_{t\geq 0}, \{\mathbb{P}_z\}_{z\in D_{\Delta}})$ is a symmetric Hunt process on $(D,\mathcal{B}(D))$ corresponding to $(\mathcal{E}^D,\mathcal{F}^D)$. We set
\begin{align*}
\mathcal{G}_t^0&:=\sigma (Z^D_s;s\leq t),\\
\mathcal{G}_t^{\mu}&:=\sigma (\mathcal{G}_t^0,\mathcal{N}_{\mu}),\ \text{for}\ \mu \in \mathcal{P}(D_{\Delta}),\\
\mathcal{G}_t&:=\bigcap_{\mu \in \mathcal{P}(D_{\Delta})}\mathcal{G}_t^{\mu}.
\end{align*}
Then the inclusion $\mathcal{F}^Z_{t\land \tau_D-}\subset \mathcal{G}_t\subset \mathcal{F}^Z_{t\land \tau_D}$ holds. If $A$ is an AF of $Z$, $B_t:=A_{t\land \tau_D}$ is $\{\mathcal{F}_{t\land \tau_{D}}\}$-adapted and satisfies the additivity
\[
B_{s+t}=B_s+B_t\circ \theta^D_s.
\]
Moreover, if $A$ is a CAF of $Z$, then $B_t$ is $\{\mathcal{G}_t\}$-adapted and a CAF of $Z^D$.
\begin{dfn}
Let $M\in \mathring{\mathcal{M}}$, $D$ an open set of $E$ and $D_1$ a relatively compact open set of $D$ such that $\overline{D_1}\subset D$. The integral of locally bounded function $g$ in $\mathcal{F}_{loc}^D$ along $\Gamma (M)^{\tau_{D_1}}$ is defined by
\begin{gather}
\int_0^t g(Z_{s-})\, d\Gamma (M)^{\tau_{D_1}}:= \int_0^{t\land \tau_{D_1}} (\phi_{D_2} g)(Z_{s-})\, d\Gamma (M),\ t\geq 0,\label{integralexitdef}
\end{gather}
where $D_2$ is a relatively compact open neighborhood of $D$ such that $\overline{D_1}\subset D_2 \subset \overline{D_2} \subset D$ and $\phi_{D_2}$ is a function satisfying $\phi_{D_2}\in C_0(E)\cap \mathcal{F}$ with $0\leq \phi_{D_2} \leq 1$, $\phi_{D_2}=1$ on $D_2$.
\end{dfn}
\begin{rem}
This integral is independent of the choice of $D_2$ and $\phi_{D_2}$ by Lemma 3.4 of \cite{Kuwae}. Moreover, it is also independent of the choice of $L\in \mathring{\mathcal{M}}$ satisfying
\[
\Gamma (M)^{\tau_{D_1}}=\Gamma (L)^{\tau_{D_1}}.
\]
In fact, we have $\Gamma (M-L)^{\tau_{D_1}}=0$ and consequently it holds that
\[
\int g(Z_{s-})\, d\Gamma (M)^{\tau_{D_1}}=\int g(Z_{s-})\, d\Gamma (L)^{\tau_{D_1}}
\]
by \lref{integralPCAF}. Thus we can deduce that the integral \eqref{integralexitdef} is well-defined.
\end{rem}
\begin{dfn}\label{integralexittime}
Let $H$ be a function on $[0,\infty)\times \Omega$ and $M\in \mathring{\mathcal{M}}$. Let $J=H+\Gamma (M)$. Suppose that the process $t\to H_{t\land \tau_{D_1}}$ is a $\mathbb{P}_z$-local mimartingale for q.e. $z\in E$. Then for $g\in \mathcal{F}^D_{loc}$ the stochastic integral $\int g(Z_{s-})\, dJ^{\tau_{D_1}}$ is defined by
\[
\int_0^t g(Z_{s-})\, dJ^{\tau_{D_1}}_s:= \int_0^t g(Z_{s-})\, dH^{\tau_{D_1}}_s + \int_0^t g(Z_{s-})\, d\Gamma (M)^{\tau_{D_1}}_s,\ t\leq \tau_{D_1}.
\]
\end{dfn}
\begin{rem}\label{integralexittimerem}
The integral is well-defined, i.e. the right-hand side is independent of the choice of a function $H'\colon [0,\infty)\times \Omega \to \mathbb{R}$ and $M'\in \mathring{\mathcal{M}}$ such that $H'^{\tau_{D_1}}$ is a $\mathbb{P}_z$-local martingale for q.e. $z\in E$ and
\[
H^{\tau_{D_1}}+\Gamma (M)^{\tau_{D_1}} = (H')^{\tau_{D_1}}+\Gamma (M')^{\tau_{D_1}}.
\]
In fact, we have $H^{\tau_{D_1}}-(H')^{\tau_{D_1}} = \Gamma (L)^{\tau_{D_1}}$, where $L=M'-M$, and consequently,   $H^{\tau_{D_1}}-(H')^{\tau_{D_1}}$ is a CAF of $Z^{D_1}$ which is a $\mathbb{P}_z$-local martingale for q.e. $z \in E$. For a stochastic process $Q_t$, define
\[
V_Q^n(t):=\sum_{k=1}^n\left( Q_{\left( \frac{k}{n}t\right) \wedge \tau_D}-Q_{\left( \frac{k-1}{n}t\right) \wedge \tau_D}\right) ^2.
\]
Then for all $N \in \widetilde{\mathcal{N}_c}$,
\begin{align*}
\mathbb{E}_m\left[ V^n_N(t)\right] & \leq \mathbb{E}_m\left[ \sum_{k=1}^n\left( N_{\frac{kt}{n}}-N_{\frac{(k-1)t}{n}} \right)^2 \right]\\
&= \sum_{k=1}^n\mathbb{E}_m\left[ \left( N_{\frac{t}{n}}\circ \theta_{\frac{k-1}{n}t}\right)^2 \right]\\
&\leq n\mathbb{E}_m\left[ \left(N_{\frac{t}{n}} \right)^2\right] \to 0\ (\text{as}\ n\to \infty).
\end{align*}
On the other hand, we can define the quadratic variation of $\Gamma(L)^{\tau_{D_1}}$ as a $\mathbb{P}_z$-continuous local martingale for q.e. $z\in E$ and it holds that $\langle \Gamma(L)^{\tau_{D_1}} \rangle=0$ $\mathbb{P}_z$-a.s. for $m$-a.e. $z \in D_1$ by Fatou's lemma. Thus $\Gamma (L)^{\tau_{D_1}}$ is $m$-equivalent to zero as a CAF of $Z^{D_1}$. Therefore we can deduce that
\[
\Gamma(M)_{t\land \tau_{D_1}}=\Gamma(M')_{t\land \tau_{D_1}},\ t\geq 0,\ \mathbb{P}_z\text{-a.s. q.e.}\ z\in D_1.
\]
For $z\in E\, \backslash \, D_1$, it is obvious that $\Gamma (M)^{\tau_{D_1}} = \Gamma(M')^{\tau_{D_1}}=0$, $\mathbb{P}_z$-a.s. Thus the decomposition of $J^{\tau_{D_1}}$ is unique and consequently the integral is well-defined.
\end{rem}
%%%%%%%%%%%%%%%%%%%%%%%%%
\section{Proof of \tref{harmonicmartingale}}\label{harmonicmap}
First we define martingales and harmonic maps with respect to regular Dirichlet forms. The theory of discontinuous martingales on manifolds was developed in \cite{Pic1991}. In \cite{Oka23}, the author focused on discontinuous martingales on Riemannian submanifolds of higher dimensional Euclidean spaces as a special case considered in \cite{Pic1991}, and then extended them so that we allowed the killing of martingales. We begin with recalling semimartingales and martingales defined in \cite{Oka23}. Let $M$ be a Riemannian submanifold of $\mathbb{R}^d$.
\begin{dfn}\label{semimartingale}
Let $X$ be an $\mathbb{R}^d$-valued semimartingale, $\zeta$ an $\{\mathcal{F}_t\}_{t\geq 0}$-stopping time and $p$ a point in $\mathbb{R}^d$. We call $(X,\zeta, p)$ an $M$-valued semimartingale with the the end point $p$ if it satisfies $X_t\in M$ for $t\in [0,\zeta)$ and $X_t=p$ for $t \geq \zeta$ a.s.
\end{dfn}
If $p \in M$ or $\zeta=\infty$ almost surely, then an $M$-valued semimartingale $X$ with the end point $p$ is just an $M$-valued semimartingale in a usual sense.
%\begin{rem}
%\dref{semimartingale} is slightly different from Definition 3.6 in \cite{Oka23} in that the terminal value is random. However, we can define the stochastic integral $\dis \int \langle V(X_-), dX\rangle$ for any smooth vector field $V$ on $M$ in the same way as Remark 3.7 in \cite{Oka23}. Let $\bar{V} \in C^{\infty}(\mathbb{R}^d; \mathbb{R}^d)$ be an extension of $V$. Then the integral $\dis \int_0^t \langle \bar{V}(X_{s-}), dX_s \rangle$ is independent of the extension since 
%\begin{align*}
%\int_0^t \langle \bar{V}(X_{s-}), dX_s \rangle &= \int_0^t \langle \bar{V}(X_{s-}), dX^{\zeta}_s\rangle\\
%&= \int_0^t \langle \bar{V}(X_{s-})\mathbf{1}_{[0,\zeta]}(s) , dX_s\rangle\\
%&= \int_0^t \langle V(X_{s-})\mathbf{1}_{[0,\zeta]}(s), dX_s\rangle.
%\end{align*}
%Thus we can define the integral by
%\[
%\int \langle V(X_-), dX\rangle:=\int \langle \bar{V}(X_{-}), dX_s\rangle.
%\]
%\end{rem}
\begin{dfn}
Let $(X,\zeta, p)$ be an $M$-valued $\{\mathcal{F}_t\}_{t\geq 0}$-semimartingale with the end point $p$. The triple $(X,\zeta, p)$ is called an $M$-valued martingale with the end point $p$ if for any smooth vector field $V$ on $M$, the stochastic integral $\displaystyle \int \langle V(X_-), dX \rangle$ is a local martingale.
\end{dfn}
\begin{dfn}\label{defquasi-harmonic}
A Borel measurable map $u:E\to M$ is said to be quasi-harmonic on $D$ if for each relatively compact open set $D_1$ such that $\overline{D_1}\subset D$, $(u(Z)^{\tau_{D_1}},\zeta, 0)$ is an $M$-valued $(\mathbb{P}_z, \{ \mathcal{F}^Z_t \})$-martingale with an end point for q.e. $z\in E$.
\end{dfn}
In \dref{defquasi-harmonic}, we set $0\in \mathbb{R}^d$ as the end point since we set the value of a function on $E_{\Delta}$ at $\Delta$ as $0$. If $Z$ has no inside killing, $u$ is quasi-harmonic on $D$ if and only if for each relatively compact open set $D_1$ such that $\overline{D_1}\subset D$, $\{u(Z_{t \land \tau_{D_1}})\}_{t\geq 0}$ is an $M$-valued $\Prob_z$-martingale for q.e. $z\in E$.\\

Next we will recall the following conditions for a function $u\in \mathcal{F}^D_{loc}$ which were considered in \cite{Chen} and Chapter 6 of \cite{ChenFuku}: For any relatively compact open set $D_1$, $D_2$ with
\begin{align}\label{opensets}
\overline{D_1}\subset D_2 \subset \overline{D_2} \subset D,
\end{align}
\begin{description}
\item[\thetag{A}]$\int_{D_1\times (E \backslash \, D_2)}|u(w)|J(dzdw) <\infty$
\item[\thetag{B}]and if we define a function $f_u$ by
\begin{align}
f_u(z):= \mathbf{1}_{D_1}(z)\mathbb{E}_z\left[ ((1-\phi_{D_2})|u|)(Z_{\tau_{D_1}})\right]\ \text{for}\ z\in E,\label{fu}
\end{align}
then $f_u \in \mathcal{F}_e^{D_1}$, where $\phi_{D_2}$ is a function satisfying
\[
\phi_{D_2}\in \mathcal{F}\cap C_0(D),\ 0\leq \phi_{D_2} \leq 1,\ \phi_{D_2}=1\ \text{on}\ D_2.
\]
\end{description}
For a Borel measurable locally bounded function $u\in \mathcal{F}^D_{loc}$ satisfying \thetag{A}, we can define $\mathcal{E}(u,\psi)$ for all $\psi \in C_0(D)\cap \mathcal{F}$ by
\begin{align}
\mathcal{E}(u,\psi)&=\frac{1}{2}\mu_{\langle u,\psi \rangle}^c(D) + \frac{1}{2}\int_{E\times E \, \backslash \, \mathrm{diag}(E)}(u(z)-u(w))(\psi (z) - \psi (w) )\, J(dzdw) \nonumber \\
&\h + \int _D u(z) \psi (z) \, k(dz).\label{Eupsi}
\end{align}
The right-hand side of \eqref{Eupsi} is finite by Lemma 6.7.8 of \cite{ChenFuku}. For a Borel measurable function $u:E\to \mathbb{R}$, we set
\begin{align*}
C_t&:=\int_0^{t\land \tau_{D_1}}\int_{E\, \backslash \, D_2}(1-\phi_{D_2})|u|(w)\, N(Z_s,dw)dH_s,\\
C_t^+&:=\int_0^{t\land \tau_{D_1}}\int_{E\, \backslash \, D_2}(1-\phi_{D_2})u^+(w)\, N(Z_s,dw)dH_s,\\
C_t^-&:=\int_0^{t\land \tau_{D_1}}\int_{E\, \backslash \, D_2}(1-\phi_{D_2})u^-(w)\, N(Z_s,dw)dH_s
\end{align*}
with $u^+=u\lor 0,\ u^-=(-u)\lor 0$. Then $C_t,\ C_t^+,\ C_t^-$ are PCAF's of $Z^{D_1}$ and their Revuz measures are
\begin{align}
\mu_u(dz)&:=\mu_{C}(dz)=\mathbf{1}_{D_1}(z)\int_{E\, \backslash \, D_2}(1-\phi)|u|(w)\, N(z,dw)d\mu_H(dz),\\
\mu^+_u(dz)&:=\mu_{C^+}(dz)=\mathbf{1}_{D_1}(z)\int_{E\, \backslash \, D_2}(1-\phi)u^+(w)\, N(z,dw)d\mu_H(dz),\\
\mu^-_u(dz)&:=\mu_{C^-}(dz)=\mathbf{1}_{D_1}(z)\int_{E\, \backslash \, D_2}(1-\phi)u^-(w)\, N(z,dw)d\mu_H(dz).
\end{align}
\begin{lem}\label{fu+fu-}
Let $u\in \mathcal{F}^D_{loc}$ be a locally bounded Borel measurable function satisfying \thetag{A} and \thetag{B}. Then it holds that $\mu_u(D_1)<\infty$ and $f_{u^+}, f_{u^-}\in \mathcal{F}_e^{D_1}$, where
\begin{align*}
f_{u^+}(z)&=\mathbf{1}_{D_1}(z)\mathbb{E}_z\left[ ((1-\phi_{D_2})u^+)(Z_{\tau_{D_1}})\right],\\
f_{u^-}(z)&=\mathbf{1}_{D_1}(z)\mathbb{E}_z\left[((1-\phi_{D_2})u^-)(Z_{\tau_{D_1}})\right].
\end{align*}
\end{lem}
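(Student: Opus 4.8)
The plan is to exhibit $f_{u^+}$ and $f_{u^-}$ as hitting-type functionals of the killed process $Z^{D_1}$ and to identify them as $1$-potentials of the positive Radon measures $\mu^+_u$, $\mu^-_u$, after which membership in $\mathcal{F}_e^{D_1}$ follows from the general potential theory recalled in the preliminaries, and the bound $\mu_u(D_1)<\infty$ comes out of condition ($A$). First I would observe that $(1-\phi_{D_2})u^+$ and $(1-\phi_{D_2})u^-$ vanish on $D_2\supset \overline{D_1}$, so that $\mathbb{E}_z[((1-\phi_{D_2})u^\pm)(Z_{\tau_{D_1}})]$ only records the value of $u^\pm$ at the exit point, which lands in $E\setminus D_2$. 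Exactly as in the definition of the PCAF $C^\pm$ of $Z^{D_1}$, the Lévy-system formula gives
\[
\mathbb{E}_z\bigl[((1-\phi_{D_2})u^\pm)(Z_{\tau_{D_1}})\bigr]=\mathbb{E}_z\left[\int_0^{\tau_{D_1}}\int_{E\setminus D_2}(1-\phi_{D_2})u^\pm(w)\,N(Z_s,dw)\,dH_s\right]=\mathbb{E}_z[C^\pm_\infty]
\]
for q.e.\ $z\in D_1$, and $0$ off $D_1$; here I use that $Z^{D_1}$ has no inside killing into $\Delta$ within $D_1$ other than through leaving $D_1$, so the only contribution to the exit value is the jump across $\partial$-type term captured by $N(Z_s,dw)$ with $w\notin D_2$. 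Thus $f_{u^\pm}(z)=\mathbb{E}_z[C^\pm_\infty]$, the $0$-potential of the PCAF $C^\pm$ of $Z^{D_1}$, whose Revuz measure is $\mu^\pm_u$.

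Next I would run the same computation for $C$ itself: $f_u=f_{u^+}+f_{u^-}$ (since $|u|=u^++u^-$ and the expectations add), and by hypothesis ($B_1$) we know $f_u\in\mathcal{F}_e^{D_1}$. To get finiteness of the mass, note $\mu_u(D_1)=\mu_u^+(D_1)+\mu_u^-(D_1)$ and, by the Revuz correspondence and ($A$),
\[
\mu_u(D_1)=\int_{D_1}\int_{E\setminus D_2}(1-\phi_{D_2})|u|(w)\,N(z,dw)\,d\mu_H(z)\le \int_{D_1\times(E\setminus D_2)}|u(w)|\,J(dz\,dw)<\infty,
\]
so $C$, $C^+$, $C^-$ are PCAF's of finite Revuz mass, i.e.\ their Revuz measures lie in the class for which $0$-potentials (or, after the usual $\mathcal{E}_1$-resolvent trick, $1$-potentials) are constructed. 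In particular $\mu^+_u,\mu^-_u$ are positive Radon measures of finite energy integral for $(\mathcal{E}^{D_1},\mathcal{F}^{D_1})$, being dominated by $\mu_u$ and charging no $m$-polar set.

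It remains to upgrade ``$f_u\in\mathcal{F}_e^{D_1}$ and $f_u=f_{u^+}+f_{u^-}$'' to ``each summand lies in $\mathcal{F}_e^{D_1}$''. For this I would use that $f_{u^+}$ and $f_{u^-}$ are both nonnegative and excessive (more precisely, $\alpha$-excessive for the $\mathcal{E}^{D_1}$-semigroup after the standard $e^{-\alpha t}$ discounting), being potentials of positive PCAF's of $Z^{D_1}$; a nonnegative function dominated by an element of $\mathcal{F}_e^{D_1}$ that is itself a potential of a smooth measure of finite mass belongs to $\mathcal{F}_e^{D_1}$ — this is the monotonicity of the extended Dirichlet space under domination by potentials, together with $0\le f_{u^\pm}\le f_u$. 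Concretely: since $\mu^\pm_u$ has finite energy integral (it is $\le\mu_u$, and $U_1^{D_1}\mu_u\in\mathcal{F}^{D_1}$ is bounded because $f_u$ is, up to the resolvent identity), the potential $U_1^{D_1}\mu^\pm_u\in\mathcal{F}^{D_1}\subset\mathcal{F}_e^{D_1}$ exists, and a routine identification of $f_{u^\pm}$ with $U_0^{D_1}\mu^\pm_u=\lim_{\alpha\downarrow 0}U_\alpha^{D_1}\mu^\pm_u$ (the limit being monotone and bounded by $f_u$) places $f_{u^\pm}$ in $\mathcal{F}_e^{D_1}$.

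\textbf{Main obstacle.} The delicate point is the last step: justifying that the $0$-potential $f_{u^\pm}=\mathbb{E}_\cdot[C^\pm_\infty]$ genuinely lies in the \emph{extended} Dirichlet space $\mathcal{F}_e^{D_1}$ and not merely in a formal potential class. This requires knowing that $(\mathcal{E}^{D_1},\mathcal{F}^{D_1})$ is transient on $D_1$ (so that $0$-potentials of finite-mass smooth measures make sense as extended-Dirichlet-space elements) or, failing that, working with the $1$-resolvent and transferring back; and it requires checking that the approximating sequence $U_\alpha^{D_1}\mu^\pm_u$ is $\mathcal{E}^{D_1}$-Cauchy, which is where the finiteness $\mu^\pm_u(D_1)<\infty$ together with the a priori bound coming from ($B_1$) on $f_u$ is essential. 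I expect the cleanest route is to mimic the proof of ($B_1$)$\Rightarrow f_u\in\mathcal{F}_e^{D_1}$ in \cite{Chen} / Chapter 6 of \cite{ChenFuku} verbatim, replacing $|u|$ by $u^+$ and $u^-$ throughout, since every estimate used there is monotone in the integrand.
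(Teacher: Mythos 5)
Your proposal is essentially the paper's own approach: the paper gives no independent argument for this lemma but simply observes that the proofs of Lemma 6.7.6 and Theorem 6.7.9 of \cite{ChenFuku} can be followed verbatim with $u^+$ and $u^-$ in place of $|u|$ (the harmonicity of $u$ assumed there being unused for this part), which is exactly your closing recommendation, and your L\'evy-system identification of $f_{u^\pm}$ as $0$-potentials of the PCAF's $C^{\pm}$ of $Z^{D_1}$ together with the bound $\mu_u(D_1)\le \int_{D_1\times(E\setminus D_2)}|u(w)|\,J(dzdw)<\infty$ from ($A$) is the same mechanism (it reappears in the proof of Lemma~\ref{hAB}). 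One caution about your ``concrete'' intermediate step: finite total mass of $\mu^{\pm}_u$ plus charging no polar set does not yield finite energy integral, and $f_u$ (hence $U_1\mu_u$) is not known to be bounded under ($A$)+($B_1$) alone (boundedness-type control only comes with ($B_2$)); these claims are not needed, however, if one argues as you ultimately suggest, via excessiveness of $f_{u^{\pm}}$ for $Z^{D_1}$ and the domination $0\le f_{u^{\pm}}\le f_u\in\mathcal{F}_e^{D_1}$, i.e.\ by following the cited proofs, whose estimates are monotone in the integrand.
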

\lref{fu+fu-} follows by applying Lemma 6.7.6 of \cite{ChenFuku} to $u^+$ and $u^-$. %Although in Theorem 6.7.9 of \cite{ChenFuku}, the function $u$ is supposed to be harmonic on $D$, we can completely follow the proof and show $f_{u^+}, f_{u^-} \in \mathcal{F}_e^{D_1}$. Thus we omit the proof of \lref{fu+fu-} in this article.
\begin{lem}\label{hAB}
Let $D,D_1,D_2$ be open sets in $E$ satisfying \eqref{opensets} and $u\in \mathcal{F}^D_{loc}$ a locally bounded Borel measurable function satisfying \thetag{A} and \thetag{B}. Then the process
\[
A_t:= \mathbf{1}_{\{t\geq \tau_{D_1}\}}\{(1-\phi (Z_{\tau_{D_1}}))u(Z_{\tau_{D_1}})-(1-\phi (Z_{\tau_{D_1}-}))u(Z_{\tau_{D_1}-})\}
\]
is of $\mathbb{P}_z$-integrable variation for q.e. $z\in E$ and the dual predictable projection of $A_t$ is
\[
B_t=\int_0^{t\land \tau_{D_1}}\int_{E\, \backslash \, D_2}(1-\phi)u(z)\, N(Z_s,dz)dH_s=C^+_t-C^-_t.
\]
We further set
\begin{align*}
h_1(z)&:=\mathbb{E}_z\left[ (\phi u)(Z_{\tau_{D_1}}) \right],\\
h_2(z)&:=\mathbb{E}_z \left[((1-\phi )u)(Z_{\tau_{D_1}}) \right].
\end{align*}
Then it holds that $\phi u \in \mathcal{F}^D,\ h_1 \in \mathcal{F}_e,\ \phi u-h_1\in \mathcal{F}_e^{D_1}$, and $\mathbf{1}_{D_1} h_2\in \mathcal{F}_e^{D_1}$. Moreover $h_2=\mathbf{1}_{D_1}h_2+(1-\phi)u$ satisfies \thetag{A} and it holds that
\[
\mathcal{E}(h_2,\psi)=0,\ \text{for all}\ \psi \in C_0(D_1)\cap \mathcal{F}.
\]
\end{lem}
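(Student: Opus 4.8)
The plan is to treat the four groups of assertions in turn, the first three being preparatory and the fourth being the real content. For the process $A$: for $z\in E\setminus D_1$ one has $\tau_{D_1}=0$ $\mathbb P_z$-a.s.\ and $A\equiv 0$ (the two terms defining $A_t$ cancel since $Z_{0-}=Z_0$), while for $z\in D_1$ one has $\tau_{D_1}>0$, so $Z_{\tau_{D_1}-}\in\overline{D_1}\subset D_2$, and hence (as $\phi\equiv1$ on $D_2$) the second term of $A_t$ vanishes and $A_t=\mathbf 1_{\{t\ge\tau_{D_1}\}}(1-\phi)u(Z_{\tau_{D_1}})$ records exactly the jump by which $Z$ leaves $D_1$. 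I would write $A=A^+-A^-$ with $A^\pm_t:=\mathbf 1_{\{t\ge\tau_{D_1}\}}(1-\phi)u^\pm(Z_{\tau_{D_1}})$ and realise $A^\pm_t=\sum_{0<s\le t\wedge\tau_{D_1}}f^\pm(Z_{s-},Z_s)$ for $f^\pm(z,w):=\mathbf 1_{D_2}(z)(1-\phi(w))u^\pm(w)$, which vanishes on the diagonal since $1-\phi\equiv 0$ on $D_2$: for $s<\tau_{D_1}$ one has $Z_s\in D_1\subset D_2$, so $(1-\phi)(Z_s)=0$ and the term drops, while at $s=\tau_{D_1}$ the factor $\mathbf 1_{D_2}(Z_{\tau_{D_1}-})$ equals $1$. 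The Lévy system identity then gives that the dual predictable projection of $A^\pm$ is $\int_0^{t\wedge\tau_{D_1}}\mathbf 1_{D_2}(Z_s)\big(\int_E(1-\phi)u^\pm(w)\,N(Z_s,dw)\big)\,dH_s=C^\pm_t$ (again $\mathbf 1_{D_2}(Z_s)=1$ for $s<\tau_{D_1}$), and integrability of the variation of $A$ follows from $\mathbb E_z[A^+_\infty+A^-_\infty]=\mathbb E_z[(1-\phi)|u|(Z_{\tau_{D_1}})]=f_u(z)<\infty$ q.e.\ by \lref{fu+fu-}; subtracting yields the dual predictable projection $C^+-C^-=B$.

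For $\phi u\in\mathcal F^D$ I would choose a relatively compact open $D'$ with $\operatorname{supp}\phi\subset D'\subset\overline{D'}\subset D$, use $u\in\mathcal F^D_{loc}$ to get $g\in\mathcal F^D$ with $u=g$ $m$-a.e.\ on $D'$, truncate $g$ at the level $\sup_{D'}|u|<\infty$ (normal contractions operate on $\mathcal F$) so that $g\in\mathcal F^D\cap L^\infty$, and invoke that $\mathcal F_b$ is an algebra for a regular Dirichlet form to conclude $\phi g\in\mathcal F_b$; since $\phi g$ has compact support in $D$ and $\phi u=\phi g$ $m$-a.e., this gives $\phi u\in\mathcal F^D$. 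Writing $H_{E\setminus D_1}$ for the hitting operator of $E\setminus D_1$, we have $h_1=H_{E\setminus D_1}(\phi u)$, and since $\phi u\in\mathcal F^D\subset\mathcal F_e$ the orthogonal decomposition of $\mathcal F_e$ into $\mathcal F_e^{D_1}$ and the functions $\mathcal E$-harmonic on $D_1$ (cf.\ \cite{FOT}, \cite{ChenFuku}) gives $h_1\in\mathcal F_e$, $\phi u-h_1\in\mathcal F_e^{D_1}$, and $\mathcal E(h_1,\psi)=0$ for all $\psi\in\mathcal F_e^{D_1}$. The q.e.\ finiteness of $f_u$ lets us split $\mathbf 1_{D_1}(z)\mathbb E_z[(1-\phi)u(Z_{\tau_{D_1}})]=f_{u^+}(z)-f_{u^-}(z)$, so $\mathbf 1_{D_1}h_2=f_{u^+}-f_{u^-}\in\mathcal F_e^{D_1}$ by \lref{fu+fu-}. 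The identity $h_2=\mathbf 1_{D_1}h_2+(1-\phi)u$ holds q.e.\ since $\phi=1$ on $D_1\subset D_2$ (so $(1-\phi)u=0$ there) while $\tau_{D_1}=0$ $\mathbb P_z$-a.s.\ for $z\in E\setminus\overline{D_1}$ (so $h_2(z)=(1-\phi)u(z)$ and $\mathbf 1_{D_1}h_2(z)=0$), and condition ($A$) for $h_2$ then follows from $|h_2|\le|\mathbf 1_{D_1}h_2|+|u|$, condition ($A$) for $u$, and the finite-mass estimates behind \lref{fu+fu-} (the $\mathbf 1_{D_1}h_2$ contribution to the defining integral of ($A$) is supported in $D_1$ and controlled by the finite mass of $\mu_u$).

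The heart of the argument is $\mathcal E(h_2,\psi)=0$ for $\psi\in C_0(D_1)\cap\mathcal F$. As such $\psi$ lies in $\mathcal F^{D_1}$, I would use the bilinearity of \eqref{Eupsi} together with $h_2=\mathbf 1_{D_1}h_2+(1-\phi)u$ to compute $\mathcal E(\mathbf 1_{D_1}h_2,\psi)$ and $\mathcal E((1-\phi)u,\psi)$ separately. The Lévy system computation above identifies $f_{u^\pm}$ with $\mathbb E_{\cdot}[C^\pm_\infty]$, i.e.\ with the $0$-order potential of the PCAF $C^\pm$ of $Z^{D_1}$; so, by the Revuz correspondence and the fact that $\mathcal E$ agrees with the part form $\mathcal E^{D_1}$ on $\mathcal F^{D_1}\times\mathcal F^{D_1}$ (and extends compatibly when the first argument lies in $\mathcal F_e^{D_1}$),
\[
\mathcal E(\mathbf 1_{D_1}h_2,\psi)=\int_{D_1}\psi\,d(\mu^+_u-\mu^-_u)=\int_{D_1}\psi(z)\Big(\int_{E\setminus D_2}(1-\phi)u(w)\,N(z,dw)\Big)\mu_H(dz).
\]
On the other hand, $v:=(1-\phi)u$ vanishes on the open set $D_2\supset\overline{D_1}\supset\operatorname{supp}\psi$, so in \eqref{Eupsi} the continuous part of $\mathcal E(v,\psi)$ vanishes by locality of the continuous energy measure (as $v$ is constant on $D_2$ and $\psi$ is constant off $\operatorname{supp}\psi$), the killing part vanishes because $v\psi\equiv 0$, and, symmetrising the jump term and using $\psi(z)v(z)\equiv 0$ (since $\psi(z)\ne 0$ forces $\phi(z)=1$),
\[
\mathcal E(v,\psi)=\int_{E\times E}\psi(z)\big(v(z)-v(w)\big)\,J(dzdw)=-\int_{D_1}\psi(z)\Big(\int_{E\setminus D_2}(1-\phi)u(w)\,N(z,dw)\Big)\mu_H(dz).
\]
Adding the two pieces gives $\mathcal E(h_2,\psi)=0$.

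The step I expect to be the main obstacle is this last one: identifying $f_{u^\pm}$ with the potential of $C^\pm$ and matching the resulting pairing $\int_{D_1}\psi\,d\mu^\pm_u$ against the explicit Beurling--Deny expression \eqref{Eupsi} for $\mathcal E((1-\phi)u,\psi)$, while keeping careful track of which bilinear form is in play — the global form $\mathcal E$, its representation \eqref{Eupsi} valid for functions satisfying ($A$), and the part form $\mathcal E^{D_1}$ — and checking that all the quantities are finite, which is precisely what \lref{fu+fu-} supplies through $\mu_u(D_1)<\infty$ and $f_{u^\pm}\in\mathcal F_e^{D_1}$. Making the verification of ($A$) for $h_2$ fully rigorous is a secondary, more technical, point.
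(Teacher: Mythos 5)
Your overall route is the same as the paper's: the identification of $A$ and of its compensator $B$ via the L\'evy system together with the finiteness $\mathbb{E}_z[A^{\pm}_{\tau_{D_1}}]=f_{u^{\pm}}(z)<\infty$ q.e.\ is exactly the paper's first step; the membership claims ($\phi u\in\mathcal{F}^D$, $h_1\in\mathcal{F}_e$, $\phi u-h_1\in\mathcal{F}_e^{D_1}$, $\mathbf{1}_{D_1}h_2=f_{u^+}-f_{u^-}\in\mathcal{F}_e^{D_1}$) are obtained, as in the paper, from the decomposition of the extended Dirichlet space relative to $D_1$ together with Lemma~\ref{fu+fu-}; and your cancellation of $\mathcal{E}(\mathbf{1}_{D_1}h_2,\psi)=\int_{D_1}\tilde\psi\,d(\mu^+_u-\mu^-_u)$ (potential of the PCAFs $C^{\pm}$ via the Revuz correspondence) against the jump part of $\mathcal{E}((1-\phi)u,\psi)$ computed from \eqref{Eupsi} is precisely the argument of Theorem 6.7.9 of \cite{ChenFuku}, which the paper simply cites at this point. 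So in substance your proof coincides with the paper's, only with the cited step written out; that part is sound, including the sign bookkeeping and the use of locality and of the support condition $\psi\in C_0(D_1)$.

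The one place where your written justification does not work is the verification that $h_2$ satisfies ($A$). You assert that the contribution of $\mathbf{1}_{D_1}h_2$ is ``controlled by the finite mass of $\mu_u$''; but $\mu_u(D_1)<\infty$ controls $\int_{D_1\times(E\setminus D_2)}(1-\phi)|u|(w)\,J(dzdw)$, whereas condition ($A$) for $h_2$ (for an arbitrary admissible pair $D_1',D_2'$) requires in particular $\int_{D_1'\times((E\setminus D_2')\cap D_1)}|h_2(w)|\,J(dzdw)<\infty$, i.e.\ an integral of $\mathbf{1}_{D_1}|h_2|\le f_{u^+}+f_{u^-}$ over a region inside $D_1$ against a different finite measure. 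Under ($B_1$) alone $f_{u^{\pm}}$ need not be bounded --- it is only an element of $\mathcal{F}_e^{D_1}$ --- so finiteness of $J(D_1'\times(E\setminus D_2'))$ does not by itself give the bound; one needs an integrability statement for quasi-continuous elements of $\mathcal{F}_e$ against measures of the form $\mathbf{1}_{D_1'}(z)N(z,E\setminus D_2')\,\mu_H(dz)$ (for instance by dominating such a measure by the jump energy measure of a cut-off function and invoking the standard energy estimates, or by following the corresponding step of the proof of Theorem 6.7.9 of \cite{ChenFuku}, which is what the paper does). Note, however, that your main identity $\mathcal{E}(h_2,\psi)=0$ does not actually need the full condition ($A$) for $h_2$: for $\psi\in C_0(D_1)\cap\mathcal{F}$ the two pieces converge separately (Cauchy--Schwarz for $\mathbf{1}_{D_1}h_2\in\mathcal{F}_e^{D_1}$, and condition ($A$) for $u$ for the piece $(1-\phi)u$), so the gap is confined to that one auxiliary assertion of the lemma.
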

\begin{proof}
First we set
\begin{gather*}
A^+_t:=\mathbf{1}_{\{t\geq \tau_{D_1}\}} \{(1-\phi (Z_{\tau_{D_1}}))u^+(Z_{\tau_{D_1}})-(1-\phi (Z_{\tau_{D_1}-}))u^+(Z_{\tau_{D_1}-})\}\\
A^-_t:=\mathbf{1}_{\{t\geq \tau_{D_1}\}} \{(1-\phi (Z_{\tau_{D_1}}))u^-(Z_{\tau_{D_1}})-(1-\phi (Z_{\tau_{D_1}-}))u^-(Z_{\tau_{D_1}-})\}.
\end{gather*}
Then by L\'evy system formula \eqref{Levysystem}, it holds that
\begin{gather*}
\mathbb{E}_z\left[ A^+_{\tau_{D_1}} \right]=\mathbb{E}_z\left[C^+_{\tau_{D_1}}\right]=f_{u^+}(z)<\infty,\\
\mathbb{E}_z\left[ A^-_{\tau_{D_1}}\right]=\mathbb{E}_z\left[C^-_{\tau_{D_1}}\right]=f_{u^-}(z)<\infty
\end{gather*}
for q.e. $z\in E$. Thus we obtain
\[
\mathbb{E}_z\left[|A_{\tau_{D_1}}|\right]=f_{u^+}(z)+f_{u^-}(z)<\infty,\ \text{for q.e.}\ z\in E.
\]
Thus $A_t$ is a process of integrable variation and its dual predictable projection is $B_t$ under $\mathbb{P}_z$ for q.e. $z\in E$. In the second claim of \lref{hAB}, $\phi u\in \mathcal{F}^D$, $h_1\in \mathcal{F}_e$, and $\phi u-h_1\in \mathcal{F}_e^{D_1}$ are obvious from the decomposition of $\mathcal{F}_e^D$. Moreover, we have $\mathbf{1}_{D_1}h_2=f_{u^+}-f_{u^-}\in \mathcal{F}_e^{D_1}$ by \lref{fu+fu-}. The last claim also can be shown in the same way as the proof of Theorem 6.7.9 of \cite{ChenFuku} without the harmonicity of $u$.
\end{proof}
Let $\mathcal{C}$ be a special standard core of $(\mathcal{E}, \mathcal{F})$ and $D$ an open set of $E$. We set
\begin{align*}
\mathcal{C}_D&:=\{ \psi \in \mathcal{C}\mid \mathrm{supp}[\psi]\subset D \},\\
\mathcal{C}_D(\mathbb{R}^d)&:=\{ \psi:E\to \mathbb{R}^d \mid \psi=(\psi^1,\dots,\psi^d),\ \psi^i\in \mathcal{C}_D\ \text{for each }i\},\\
\mathcal{F}^D_{loc}(\mathbb{R}^d)&:=\{ u=(u^1,\dots,u^d):E\to \mathbb{R}^d\mid u^i \in \mathcal{F}^D_{loc}\ \text{for each}\ i\},\\
\mathcal{F}^D_{loc}(M)&:=\{ u\in \mathcal{F}_{loc}^D(\mathbb{R}^d) \mid u(z)\in M,\ m\text{-a.e. }z\}.
\end{align*}
$u=(u^1,\dots,u^d)\in \mathcal{F}^D_{loc}(M)$ is said to satisfy \thetag{A} (resp. \thetag{B}) if $u^i$ satisfies \thetag{A} (resp. \thetag{B}) for each $i$. For $u\in \mathcal{F}^D_{loc}(M)$, we further set
\[
\mathcal{C}_D(u^*TM):=\{ \psi \in \mathcal{C}_D(\mathbb{R}^d)\mid \psi(z)\in T_{u(z)}M,\ \text{a.e.}\ z\in D \}.
\]
We define $\mathcal{F}^D(\mathbb{R}^d)$, $\mathcal{F}^D(u^*TM)$ and $\mathcal{F}^D_e(u^*TM)$ in the same way.
\begin{dfn}\label{weaklyharmonic}
Let $u\in \mathcal{F}^D_{loc}(M)$ be a locally bounded Borel measurable map satisfying \thetag{A} and \thetag{B}. $u$ is called a weakly harmonic map on $D$ if
\[
\sum_{j=1}^d\mathcal{E}(u^j,\psi^j)=0,\ \text{for all}\ \psi \in \mathcal{C}_D(u^*TM).
\]
\end{dfn}
We divide \tref{harmonicmartingale} into the following Theorems \ref{harmonicmartingale1} and \ref{harmonicmartingale2}.
\begin{thm}\label{harmonicmartingale1}
Let $u\colon E \to M$ be a Borel measurable map in $\mathcal{F}^D_{loc}(M)$ which is locally bounded on $D$ and satisfies \thetag{A} and \thetag{B}. Suppose that $u$ is weakly harmonic on $D$. Then $u$ is quasi-harmonic on $D$.
\end{thm}
\begin{thm}\label{harmonicmartingale2}
Let $u \in \mathcal{F}^D_{loc}(M)$ be quasi-harmonic on $D$, locally bounded on $D$ and satisfy \thetag{A} and \thetag{B}. Then $u$ is weakly harmonic on $D$.
\end{thm}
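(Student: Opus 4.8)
The plan is to run the argument of the proof of \tref{harmonicmartingale1} in reverse. Fix $\psi=(\psi^1,\dots,\psi^d)\in\mathcal{C}_D(u^*TM)$ and choose relatively compact open sets $D_1,D_2$ with $\mathrm{supp}[\psi]\subset D_1\subset\overline{D_1}\subset D_2\subset\overline{D_2}\subset D$, together with a cut-off $\phi=\phi_{D_2}$ which equals $1$ on a neighbourhood of $\overline{D_1}$. Exactly as in the proof of \tref{harmonicmartingale1} set $h_1(z)=\mathbb{E}_z[\phi u(Z_{\tau_{D_1}})]$, $h_2(z)=\mathbb{E}_z[(1-\phi)u(Z_{\tau_{D_1}})]$, $h=h_1+h_2$ and $v:=u-h\in\mathcal{F}_e^{D_1}$, so that for $t\le\tau_{D_1}$
\[
u(Z_t)=u(Z_0)+\bigl(h(Z_t)-h(Z_0)\bigr)+M^{[v]}_t+N^{[v]}_t .
\]
By the lemma just before \dref{weaklyharmonic}, $H_t:=h(Z_{t\wedge\tau_{D_1}})-h(Z_0)$ is a $\mathbb{P}_z$-square integrable martingale for q.e.\ $z$; and $\mathcal{E}(h^j,\psi^j)=0$ by \lref{hAB}. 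Hence $\mathcal{E}(\psi,u)=\sum_j\mathcal{E}(\psi^j,v^j)$, and it suffices to show $\sum_j\mathcal{E}(\psi^j,v^j)=0$.

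Since $u$ is quasi-harmonic on $D$, $u(Z)^{\tau_{D_1}}$ is, by the very definition of quasi-harmonicity, an $\mathbb{R}^d$-valued $\mathbb{P}_z$-semimartingale for q.e.\ $z$. Subtracting from it the martingale $u(Z_0)+H_t+M^{[v],\tau_{D_1}}_t$, each component $N^{[v^k],\tau_{D_1}}$ becomes a continuous $\mathbb{P}_z$-semimartingale; being moreover of zero quadratic variation because $v^k\in\mathcal{F}_e$, its continuous local-martingale part vanishes (the estimate in \rref{integralexittimerem}), so $N^{[v^k],\tau_{D_1}}$ is a continuous additive functional of locally bounded variation, the Stieltjes integral along it is available, and \lref{integralPCAF} applies. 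This is exactly the point where, in \tref{harmonicmartingale1}, the harmonicity of $u$ had to be used to produce the semimartingale property of $u(Z)^{\tau_{D_1}}$; here it is free.

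Next I test against coordinate functions. For $i=1,\dots,d$ let $f_i:=x^i|_M$ and let $\bar f_i\in C^\infty(\mathbb{R}^d)$ be an extension with $\bar f_i(x)=f_i(x)$ and $D\bar f_i(x)=\nabla f_i(x)=P_x e_i$ for $x\in M$, where $P_x$ is the orthogonal projection of $\mathbb{R}^d$ onto $T_xM$. As $u$ is locally bounded on $D$, the process $u(Z)^{\tau_{D_1}}$ takes values, off the end point, in a compact subset $K$ of $M$, so $D\bar f_i|_M$ may be replaced, without affecting the integral below, by a bounded vector field $V_i$ on $M$ that agrees with it on $K$; quasi-harmonicity of $u$ then gives that $\sum_k\int_0^t D_k\bar f_i\circ u(Z_{s-})\,du^k(Z)^{\tau_{D_1}}_s$ is a $\mathbb{P}_z$-local martingale for q.e.\ $z$. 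Subtracting the local martingales $\sum_k\int D_k\bar f_i\circ u(Z_{s-})\,dH^k_s$ and $\sum_k\int D_k\bar f_i\circ u(Z_{s-})\,dM^{[v^k],\tau_{D_1}}_s$ (integrals of bounded predictable integrands against local martingales), we conclude that $\sum_k\int_0^t D_k\bar f_i\circ u(Z_{s-})\,dN^{[v^k],\tau_{D_1}}_s$ is at once a local martingale and a continuous process of bounded variation, hence vanishes identically, $\mathbb{P}_z$-a.s.\ for q.e.\ $z$.

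It remains to transfer this pathwise identity to the Dirichlet form. The Revuz-measure computation carried out in the proof of \tref{harmonicmartingale1} (via \lref{integralPCAF} and \eqref{rightdel}) gives, for every $l\in\mathcal{F}^{D_1}_b$,
\[
\lim_{t\to0}\frac1t\,\mathbb{E}_{lm}\!\left[\sum_k\int_0^t\phi\,D_k\bar f_i\circ u(Z_{s-})\,dN^{[v^k],\tau_{D_1}}_s\right]=-\sum_k\mathcal{E}\!\left(l\phi\,D_k\bar f_i\circ u,\,v^k\right),
\]
and, the left-hand side being $0$ by the previous step, $\sum_k\mathcal{E}(l\phi\,D_k\bar f_i\circ u,v^k)=0$. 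Choosing $l=\psi^i\in\mathcal{F}^{D_1}_b$, using $\phi\psi^i=\psi^i$, and using that $\psi(z)\in T_{u(z)}M$ forces $\psi^j(z)=(P_{u(z)}\psi(z))^j=\sum_i D_j\bar f_i(u(z))\,\psi^i(z)$, summation over $i$ and $j$ gives
\[
\sum_j\mathcal{E}(\psi^j,v^j)=\sum_i\sum_j\mathcal{E}\!\left(\psi^i\,D_j\bar f_i\circ u,\,v^j\right)=0 ,
\]
whence $\mathcal{E}(\psi,u)=0$; since $\psi$ was arbitrary, $u$ is weakly harmonic on $D$. The principal obstacle is the displayed identity of the last step: one must show that $N^{[v^k],\tau_{D_1}}$ is a bounded-variation CAF whose signed Revuz measure pairs as $-\mathcal{E}(\,\cdot\,,v^k)$ with functions supported in $D_1$, and that multiplying the integrand by $\phi\,D_k\bar f_i\circ u$ scales that measure accordingly; this rests on the zero-energy structure of $N^{[v]}$, the localization by $\phi$ and $\tau_{D_1}$, and the quadratic-variation estimates of \rref{integralexittimerem}, and is the technical core already present in \tref{harmonicmartingale1}. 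The remaining ingredients --- replacing $D\bar f_i|_M$ by a bounded vector field near the end point, and the $\mathcal{E}$-density of $\mathcal{C}_{D_1}(u^*TM)$ in $\mathcal{F}_e^{D_1}(u^*TM)$ --- are routine.
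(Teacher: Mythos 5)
Your proposal is correct and follows essentially the same route as the paper's proof: the same decomposition $u=h+v$ with $v\in\mathcal{F}_e^{D_1}$, the same appeal to \lref{hAB} for $\mathcal{E}(h,\psi)=0$, the same observation that quasi-harmonicity makes the relevant stochastic integral a local martingale so that the integral along $N^{[v],\tau_{D_1}}$ is a continuous local martingale CAF and hence vanishes (the zero-quadratic-variation argument of \rref{integralexittimerem}), and the same Nakao--Revuz identity (the computation behind \eqref{rightdel}, via \lref{integralPCAF}) to convert that vanishing into $\mathcal{E}(\psi,v)=0$. The only difference is cosmetic: you route the tangency of $\psi$ through the coordinate vector fields $D\bar f_i\circ u=P_{u}e_i$ and test the form with $l=\psi^i$, whereas the paper integrates $\psi_j(Z_{s-})$ directly against $du^j(Z)^{\tau_{D_1}}$ and tests with $l=\phi$; your variant merely makes the appeal to the definition of an $M$-valued martingale slightly more explicit.
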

\begin{proof}[Proof of \tref{harmonicmartingale1}]
Let $\phi=\phi_{D_2}$, $h_1(z):=\mathbb{E}_z\left[ (\phi u)(Z_{\tau_{D_1}})\right]$, $h_2(z):=\mathbb{E}_z\left[ ((1-\phi)u)(Z_{\tau_{D_1}})\right]$, $h=h_1+h_2$ and $v:=u-h$. Then for each $j=1,\dots,d$, $v^j=(\phi u^j-h^j_1) - \mathbf{1}_{D_1}h^j_2\in \mathcal{F}_e^{D_1}$. Thus we can decompose $u^j(Z_t)$ as
\begin{gather}
u^j(Z_t)=u^j(Z_0)+h^j(Z_t)-h^j(Z_0)+M^{[v^j]}_t+N_t^{[v^j]}\label{uhv}
\end{gather}
by Fukushima decomposition. Furthermore, since $\mathcal{E}(h^j,\psi^j)=0$ for all $\psi \in \mathcal{C}_{D_1}(\mathbb{R}^d)$ and $j=1,\dots,d$ by \lref{hAB}, it holds that
\begin{align}
\sum_{j=1}^d\mathcal{E}(v^j,\psi^j)=\sum_{j=1}^d\mathcal{E}(u^j,\psi^j)-\sum_{j=1}^d\mathcal{E}(h^j,\psi^j)=0.\label{Evpsi}
\end{align}
Note that $\mathcal{C}_D(u^*TM)$ is $\mathcal{E}$-dense in $\mathcal{F}_e^D(u^*TM)$ because $\mathcal{F}^D(u^*TM)$ is a closed subset in $(\mathcal{F}^D(\mathbb{R}^d),\mathcal{E}_1^D)$. Then \eqref{Evpsi} holds for all $\psi \in \mathcal{F}^{D_1}_e(u^*TM)$. We take $f\in C^{\infty}(M)$ and extend $f$ to $\bar{f} \in C^{\infty}(\mathbb{R}^d)$ in such a way that for $x\in M$,
\begin{gather}\label{nabla}
\bar{f}(x)=f(x),\ D\bar{f}(x)=\nabla f(x),
\end{gather}
where $D\bar{f}$ is the gradient of $\bar{f}$ as a function on $\mathbb{R}^d$. Then $\phi D\bar{f}\circ u \in \mathcal{F}^{D}(u^*TM)$. It suffices to show that $u(Z)^{\tau_{D_1}}$ is a $\mathbb{P}_z$-semimartingale and $\int_0^t D_j\bar{f}\circ u (Z_{s-})\, du(Z)^{\tau_{D_1}}_s$ is a $\mathbb{P}_z$-local martingale for q.e. $z\in E$. Since $H_t:=h(Z_{t\land \tau_{D_1}})-h(Z_0)$ is an $\mathbb{R}^d$-valued $\mathbb{P}_z$-uniformly integrable martingale for q.e. $z\in E$, we can define the stochastic integral $\int_0^t D_j\bar{f}\circ u (Z_{s-})\, du(Z)^{\tau_{D_1}}_s$ in the sense of \dref{integralexittime}. Then it holds that
\begin{align*}
\lim_{t\to 0}\frac{1}{t}\mathbb{E}_{lm}\left[\sum_{j=1}^d \int_0^t \phi D_j\bar{f}\circ u(Z_{s-})dN^{[v^j]}_s\right] = -\sum_{j=1}^d \mathcal{E}(l \phi D_j\bar{f}\circ u,v^j) = 0
\end{align*}
for all $l\in \mathcal{F}^{D_1}_b$ since $l \phi D\bar{f}\circ u \in \mathcal{F}^{D_1}(u^*TM)$. Therefore
\begin{align}
\sum_{j=1}^d\int_0^t D_j\bar{f}\circ u(Z_{s-})dN_s^{[v^j],\, \tau_{D_1}}=0,\ t\geq 0\label{intN}
\end{align}
holds and $\dis \sum_{j=1}^d \int_0^t D_j\bar{f}\circ u(Z_{s-})\, du^j(Z)^{\tau_{D_1}}_s$ is a $\mathbb{P}_z$-local martingale for q.e. $z\in E$. Next we will show that $u(Z_{t\wedge \tau_{D1}})$ is a semimartingale. Since it holds that
\[
f(u(Z_{t\wedge \tau_{D_1}})) = f((\phi u)(Z_{t\land \tau_{D_1}}))+\left\{ f(u (Z_{\tau_{D_1}})) - f((\phi u) (Z_{\tau_{D_1}})) \right\} \mathbf{1}_{\{ t\geq \tau_{D_1}\}}
\]
and the second term of the right-hand side is a process of bounded variation, it suffices to show that $f(\phi u(Z))^{\tau_{D_1}}$ is a semimartingale. By It\^o's formula for Dirichlet processes shown in \cite{Nakao}, it holds that
\begin{align*}
f((\phi u)(Z_t))-f((\phi u)(Z_0)) &= \sum_{j=1}^d\int_0^t D_j\bar{f}\circ (\phi u)(Z_{s-})\, dM^{[\phi u^j]}_s\\
&\h + \sum_{j=1}^d\int_0^t D_j\bar{f}\circ (\phi u)(Z_{s-})\, dN^{[\phi u^j]}_s\\
&\h + \text{(a process of bounded variation on each compact interval)}
\end{align*}
for all $t\in [0, \infty)$. Since
\[
D_j\bar{f}\circ (\phi u)(z)=\phi (z)(D_j\bar{f}\circ u)(z),\ z\in D_1,
\]
it holds that
\begin{align*}
\int_0^{t \wedge \tau_{D_1}}D_j\bar{f}\circ (\phi u)(Z_{s-})\, dN^{[\phi u^j]}_s= & \int_0^{t \wedge \tau_{D_1}}\phi \cdot (D_j\bar{f}\circ u)(Z_{s-})\, dN^{[\phi u^j]}_s
\end{align*}
for all $t\geq 0$, $\mathbb{P}_z$-a.s. q.e. $z$. By Fukushima decomposition, it holds that
\[
u^j(Z_t)-u^j(Z_0)=M^{[\phi u^j]}_t+N^{[\phi u^j]}_t,\ t<\tau_{D_1},\ \mathbb{P}_z\text{-a.s. q.e.}\ z\in E.
\]
Thus by \eqref{uhv}, we have
\begin{gather}
M^{[\phi u^j]}_t+N^{[\phi u^j]}_t=h^j(Z_t)-h^j(Z_0)+M^{[v^j]}_t+N^{[v^j]}_t,
\end{gather}
for $t<\tau_{D_1}$, $\mathbb{P}_z$-a.s. q.e. $z\in E$. Set $A^j_t$ and $B^j_t$ as in \lref{hAB} for the function $u^j$. Then $A^j$ is a process of $\mathbb{P}_z$-integrable variation for q.e. $z$ and $B^j$ is the dual predictable projection of $A$. Moreover it holds that
\begin{gather}
M^{[\phi u^j]}_t+N^{[\phi u^j]}_t + (A^j_t-B^j_t) + B^j_t=H^j_t+M^{[v^j]}_t+N^{[v^j]}_t,
\end{gather}
for $t\leq \tau_{D_1}$ and $j=1,\dots,d$, $\mathbb{P}_z$-a.s. q.e. $z\in E$. Since $H_t$ is an $\mathbb{R}^d$-valued $\mathbb{P}_z$-martingale for q.e. $z\in E$, we can decompose $H_t$ as
\[
H^j_t=H^{j,c}_t+H_t^{j,d},
\]
for each $j=1,\dots,d$ where $H^{j,c}$ is a $\mathbb{P}_z$-continuous martingale and $H^{j,d}$ is a $\mathbb{P}_z$-purely discontinuous martingale for q.e. $z\in E$. Thus $H^{j,d}_t + M^{[v^j],d}_{t\land \tau_{D_1}}-M^{[\phi u^j],d}_{t\land \tau_{D_1}}-(A^j_{t\land \tau_{D_1}}-B^j_{t\land \tau_{D_1}})$ is continuous and purely discontinuous local martingale and consequently it equals zero. Therefore it holds that
\[
N^{[\phi u^j-v^j]}=H_t^{j,c}+M^{[v^j],c}_t+M^{[\phi u^j],c}_t+B^j_t
\]
for $t\leq \tau_{D_1}$ and $j=1,\dots,d$, $\mathbb{P}_z$-a.s. q.e. $z\in E$. In the same way as in \rref{integralexittimerem}, we obtain
\[
\langle (H^{j,c}+M^{[v^j],c}+M^{[\phi u^j],c})^{\tau_{D_1}} \rangle_t=0,\ t\geq 0,\ \mathbb{P}_z\text{-a.s.}\ m\text{-a.e.}\ z\in E.
\]
Thus $N^{[\phi u^j-v^j]}_{t\land \tau_{D_1}}-B^j_{t\land \tau_{D_1}}$ is a CAF of $Z^{D_1}$ which is $m$-equivalent to zero and consequently, it equals $0$ for all $t\geq 0$, $\mathbb{P}_z$-a.s. q.e. $z \in D_1$. On the other hand it is obvious that $(H^{j,c}+M^{[v^j],c}+M^{[\phi u^j],c})^{\tau_{D_1}}=0$, $\mathbb{P}_z$-a.s. for q.e. $z\in E\, \backslash \, D_1$. Thus we obtain
\[
N^{[\phi u^j-v^j]}_t=B^j_t,\ t\leq \tau_{D_1},\ \mathbb{P}_z\text{-a.s. q.e.}\ z\in E
\]
and it holds that
\begin{align*}
\sum_{j=1}^d\int_0^{t\wedge \tau_{D_1}}\phi \cdot (D_jf\circ u)(Z_{s-})\, dN^{[\phi u^j]}_s=& \sum_{j=1}^d \int_0^{t\wedge \tau_{D_1}}\phi \cdot (D_jf\circ u)(Z_{s-})\, dN^{[v^j]}_s\\
&+\sum_{j=1}^d \int_0^{t\wedge \tau_{D_1}}\phi \cdot (D_jf\circ u)(Z_{s-})\, dB^{j}_s\\
=& \sum_{j=1}^d \int_0^{t\wedge \tau_{D_1}}\phi \cdot (D_jf\circ u)(Z_{s-})\, dB^{j}_s
\end{align*}
by \lref{integralPCAF}. Thus $(\phi f)(u(Z_t))^{\tau_{D_1}}$ is a semimartingale. Therefore we deduce that $u(Z)^{\tau_{D_1}}$ is a $\mathbb{P}_z$-semimartingale for q.e. $z\in E$ and so is $N^{[v^j],\tau_{D_1}}_t$ for each $j=1,\dots,d$. % By Theorem (3.18) in \cite{CJPS}, there exists a continuous local martingale additive functional $R$ for $Z^{D_1}$ and a CAF $V$ for $Z^{D_1}$ of locally bounded variation such that
%\[
%N^{[v], \tau_{D_1}}=R+V,\ P_z\text{-a.s. q.e.}\ z\in E.
%\]
Thus in the same way as in \rref{integralexittime}, we can show that $N^{[v^j]}_{t\land \tau_{D_1}}$ is a CAF of locally bounded variation. This enables us to see the stochastic integral along $N^{[v^j]}_{t\land \tau_{D_1}}$ as the Stieltjes integral defined for $\mathbb{P}_z$-a.s. q.e. $z\in E$. Thus \eqref{uhv} and \eqref{intN} mean that $u$ is quasi-harmonic on $D$.
%Therefore we can define the stochastic integral $\int df(u(Z_{s-})\, u(Z)^{\tau_{D_1}}_s$ in the sense of \dref{} and equals to $\int_0^t D_jf\circ u(Z_{s-})\, du^j(Z)^{\tau_{D_1}}_s$ by \lref{}.
This completes the proof.
\end{proof}
\begin{proof}[Proof of \tref{harmonicmartingale2}]
Fix $\psi \in \mathcal{C}_D(u^*TM)$. Let $D_1$, $D_2$ be relatively compact open sets of $D$ such that $\mathrm{supp}[\psi]\subset D_1 \subset \overline{D_1} \subset D_2 \subset \overline{D_2} \subset D$ and $\phi$ a function satisfying $\phi \in C_0(E)\cap \mathcal{F}$, $0\leq \phi \leq 1$ and $\phi =1$ on $D_2$. Set $h$, $v$ as in the proof of \tref{harmonicmartingale1}. Then we can define the stochastic integral
\begin{align}
\int_0^t\psi^j (Z_{s-})\, du^j(Z_s)^{\tau_{D_1}}=& \int_0^t \psi^j (Z_{s-})\, dH^{j,\tau_{D_1}}_s+\int_0^t \psi^j(Z_{s-})\, dM^{[v^j],\tau_{D_1}}_s\nonumber \\
&+ \int_0^t \psi^j (Z_{s-})\, dN^{[v^j],\tau_{D_1}} \label{integraluvh}
\end{align}
for each $j=1,\dots,d$. Here the first two terms of the right-hand side are $\mathbb{P}_z$-local martingales. Moreover, since $u$ is quasi-harmonic, the left-hand side is a $\mathbb{P}_z$-local martingale for q.e. $z\in E$. Thus we can deduce that $\dis \sum_{j=1}^d\int_0^{t\land \tau_{D_1}}\psi^j (Z_{s-})\, N^{[v^j]}_s$ is a continuous local martingale additive functional for $Z^{D_1}$ and consequently it equals zero in the same way as in \rref{integralexittimerem}. Since $\phi \psi = \psi$, it holds that
\begin{align*}
\sum_{j=1}^d \mathcal{E}(\psi^j,v^j)&= \sum_{j=1}^d\mathcal{E}(\phi \psi^j,v^j)\\
&=- \lim_{t\to 0}\frac{1}{t} \mathbb{E}_{\phi m}\left[\sum_{j=1}^d \int_0^t\psi^j (Z_{s-})\, dN^{[v^j]} _s \right]\\
&= 0.
\end{align*}
Thus we obtain
\[
\sum_{j=1}^d \mathcal{E}(\psi^j, u^j)=\sum_{j=1}^d \mathcal{E}(\psi^j, h^j+v^j)=0.
\]
Thus $u$ is a weakly harmonic map on $D$.
\end{proof}
\begin{proof}[Proof of \tref{harmonicmartingale}]
Suppose that $M$ is compact and $u\in \mathcal{F}_{loc}^D(M)$. Then $u$ is bounded and satisfies \thetag{A} and \thetag{B} by Lemma 6.7.6 of \cite{ChenFuku}. Therefore the equivalence of the weak harmonicity and the quasi-harmonicity for $u$ holds by Theorems \ref{harmonicmartingale1} and \ref{harmonicmartingale2}.
\end{proof}
\begin{rem}
According to Theorem 2.9 of \cite{Chen}, the stronger result holds for harmonic functions than \tref{harmonicmartingale2} above. Let $D\subset E$ be an open set with $m(D)<\infty$ and $u$ a function on $E$ satisfying $u\in L^{\infty}(D;m)$ and condition $(A)$ so that $\{\tilde u(Z_{t\land \tau_D}\}_{t\geq 0}$ is a $P_z$-uniformly integrable martingale for q.e. $z\in E$. Then by Theorem 2.9 of \cite{Chen}, the function $u$ is automatically in $\mathcal{F}^D_{loc}$. We have not established such a result for harmonic maps as we have not taken into account the counterpart of the uniform integrability of martingales on manifolds. Without considering the condition analogous to the uniform integrability for martingales on manifolds, the counterpart of the statement of Theorem 2.9 of \cite{Chen} does not hold for harmonic maps with values in compact manifolds. Let $u$ be a map from the $2$-dimensional unit ball to the circle given as $u(z):=\frac{z}{|z|}$. Then $u$ is bounded and maps any Brownian motion to a martingale, but the energy of $u$ is infinite around the origin. 
\end{rem}
%%%%%%%%%%%%%%%%%%%%%%%%%%%%%%%%%%%%%%%%%%%%%%%%%%%
\section{Fine continuity of weakly harmonic maps}\label{pf2}
In this section, we consider the continuity of weakly harmonic maps along the paths of Markov processes in some situations. We assume that the target manifold $M$ is compact. First we refer to \cite{Oka23} for the definition of martingales on manifolds indexed by positive time.
\begin{dfn}
Let $\{X_t\}_{t>0}$ be a process indexed by $t\in (0,\infty)$. Suppose that there exist an $\{\mathcal{F}_t\}_{t\geq 0}$-stopping time $\zeta$ and a point $p\in \mathbb{R}^d$ such that $X_t \in M$ for $t\in [0,\zeta)$ and $X_t=p$ for $t\geq \zeta$. We call $(\{X_t\}_{t>0},\zeta,p)$ an $M$-valued martingale with an end point indexed by $t\in (0,\infty)$ if for all $\varepsilon>0$, $(\{X_{t+\ep}\}_{t\geq 0},(\zeta-\ep)\lor 0, p)$ is an $M$-valued $\{\mathcal{F}_{t+\ep} \}_{t\geq 0}$-martingale with the end point $p$.
\end{dfn}
\pref{harmonicmartingale3} below provides the equivalent condition for the continuity of weakly harmonic maps along the paths of Markov processes which is called the fine continuity. For an $M$-valued martingale $(\{X_t\}_{t>0},\zeta,p)$ with an end point, we denote the quadratic variation of $X$ as an $\mathbb{R}^d$-valued semimartingale by $[X,X]^{\ep}_t$ on the interval $(\varepsilon, t]$. We write the dual predictable projection of $[X,X]^{\ep}_t$ as $\langle X,X\rangle ^{\varepsilon}_t$.
\begin{prop}\label{harmonicmartingale3}
Let $D\subset E$ be an open set and $u\colon E \to M$ a Borel measurable map in $\mathcal{F}^D_{loc}(M)$ which is quasi-continuous on $D$ with respect to a regular Dirichlet form $(\mathcal{E},\mathcal{F})$. We suppose that $u$ is a weakly harmonic map on $D$. We further suppose that the transition function $\{p_t \}_{t\geq 0}$ of $Z$ satisfies the condition of absolute continuity, i.e. there exists a Borel measurable function $p\colon [0,\infty)\times E \times E \to [0,\infty]$ such that
\[
p_tf(z)=\int_Ef(w)p_t(t,z,w)\, m(dw),\ \text{for all}\ f\in \mathcal{B}_+(E).
\]
Then for all $z\in D$ and a relatively compact open set $D_1$ such that $\overline{D_1}\subset D$, the process $\{ u(Z_t)^{\tau_{D_1}}\}_{t>0}$ is an $M$-valued $(\mathbb{P}_z,\{\mathcal{F}_t \}_{t>0})$-martingale with the end point $0$ indexed by $t\in (0,\infty)$. Moreover, for each $z_0\in D_1$, the following are equivalent:
\begin{itemize}
\item[(i)]$\dis \lim_{t\to 0} u(Z_t)^{\tau_{D_1}}$ exists in $M$, $\mathbb{P}_z$-a.s.;
\item[(ii)]$\{ u(Z_t)^{\tau_{D_1}}\}_{t> 0}$ can be extended to an $M$-valued $\mathbb{P}_z$-martingale with an end point indexed by $t\geq 0$;
\item[(iii)]$\dis \lim_{\ep \to 0} [u(Z)^{\tau_{D_1}},u(Z)^{\tau_{D_1}}]^{\ep}_t<\infty$, $\mathbb{P}_z$-a.s.;
\item[(iv)]$\dis \lim_{\varepsilon \to 0}\langle u(Z)^{\tau_{D_1}}, u(Z)^{\tau_{D_1}} \rangle ^{\varepsilon}_t<\infty$, $\mathbb{P}_z$-a.s.
\end{itemize}
\end{prop}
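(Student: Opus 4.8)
The plan is to deduce the proposition from \tref{harmonicmartingale} and the two quoted results of \cite{Okazaki}, the only substantive work being to pass from the q.e.\ martingale statement furnished by quasi-harmonicity to a statement valid at \emph{every} starting point, which is exactly what the absolute continuity hypothesis is for. \textbf{Step 1.} Since $u\in\mathcal{F}^D_{loc}(M)$ is weakly harmonic on $D$, \tref{harmonicmartingale} shows that $u$ is quasi-harmonic on $D$: there is an $m$-polar set $N_0$ such that, for the given relatively compact open $D_1$ with $\overline{D_1}\subset D$ and every $z\in E\setminus N_0$, the triple $(\iota(u(Z))^{\tau_{D_1}},\zeta,0)$ is an $M$-valued $(\mathbb{P}_z,\{\mathcal{F}^Z_t\})$-martingale with end point $0$. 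Enlarging $N_0$ by the ($m$-polar) set off which a quasi-continuous representative $\tilde u$ is undefined or fails to take values in $M$, we may assume $\tilde u$ is defined and $M$-valued on $E\setminus N_0$.

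\textbf{Step 2.} Under the absolute continuity condition every $m$-polar set is polar (cf.\ \cite{FOT}, \cite{ChenFuku}), so $\mathbb{P}_z(\sigma_{N_0}<\infty)=0$ for every $z\in E$; hence $Z_t\notin N_0$, and in particular $\tilde u(Z_t)\in M$, for all $t>0$, $\mathbb{P}_z$-a.s., for every $z\in E$. Fix $z\in D$ and $\ep>0$. By the Markov property at time $\ep$, the conditional law of $\{Z_{t+\ep}\}_{t\ge0}$ given $\mathcal{F}_\ep$ is $\mathbb{P}_{Z_\ep}$ with $Z_\ep\in E\setminus N_0$; since $\tau_{D_1}\circ\theta_\ep=(\tau_{D_1}-\ep)\lor0$ on $\{\ep<\tau_{D_1}\}$ and $\tilde u(Z_t)^{\tau_{D_1}}$ is already frozen at a point of $M$ on $\{\ep\ge\tau_{D_1}\}$, the martingale-with-end-point property of Step 1 transfers to show that $(\{\tilde u(Z_{t+\ep})^{\tau_{D_1}}\}_{t\ge0},(\zeta-\ep)\lor0,0)$ is an $M$-valued $\{\mathcal{F}_{t+\ep}\}_{t\ge0}$-martingale with end point $0$ (the filtrations $\{\mathcal{F}^Z_t\}$ and $\{\mathcal{F}_t\}$ being compatible for this purpose). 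As $\ep>0$ is arbitrary, $(\{\tilde u(Z_t)^{\tau_{D_1}}\}_{t>0},\zeta,0)$ is by definition an $M$-valued $(\mathbb{P}_z,\{\mathcal{F}_t\}_{t>0})$-martingale with end point $0$ indexed by $t\in(0,\infty)$, for every $z\in D$; this is the first assertion.

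\textbf{Step 3.} Let $z_0\in D_1$. Applying Lemma 4.10 and Proposition 4.11 of \cite{Okazaki} to the $M$-valued martingale with end point indexed by $t\in(0,\infty)$ built in Step 2 under $\mathbb{P}_{z_0}$ — here the local boundedness of $u$ on $\overline{D_1}$ keeps $\tilde u(Z_t)^{\tau_{D_1}}$ in a relatively compact subset of $M$ for small $t$, so the hypotheses of those results are met — one obtains precisely that $\lim_{t\to0}\tilde u(Z_t)^{\tau_{D_1}}$ exists in $M$ $\mathbb{P}_{z_0}$-a.s.\ if and only if $\lim_{\ep\to0}[\tilde u(Z)^{\tau_{D_1}},\tilde u(Z)^{\tau_{D_1}}]^{\ep}_t<\infty$ $\mathbb{P}_{z_0}$-a.s., that is, the equivalence of (i) and (ii).

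\textbf{Main obstacle.} The delicate point is Step 2: one must check that all the exceptional sets involved (the one from quasi-harmonicity, and the one from choosing a quasi-continuous $M$-valued representative) are genuinely $m$-polar, invoke that under absolute continuity such sets are avoided from every starting point for all $t>0$, and verify that the Markov shift really transports the ``$M$-valued martingale with end point'' structure — in particular tracking how $\tau_{D_1}$ behaves under $\theta_\ep$, and handling the bookkeeping so that $\{\tilde u(Z_t)\}_{t>0}$ is a well-defined $M$-valued process. Once this is settled, Steps 1 and 3 are direct invocations of the cited results.
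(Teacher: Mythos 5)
Your proposal is correct and follows essentially the same route as the paper, which simply notes that \pref{harmonicmartingale3} is a straightforward application of \tref{harmonicmartingale} together with Lemma 4.10 and Proposition 4.11 of \cite{Okazaki}. Your Step 2 merely makes explicit the bookkeeping the paper leaves implicit — that under the absolute continuity condition $m$-polar sets are avoided from every starting point, so the Markov shift at time $\ep$ upgrades the q.e.\ martingale property to every $z\in D$ for the process indexed by $t\in(0,\infty)$.
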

\begin{proof}
By \tref{harmonicmartingale}, the map $u$ is quasi-harmonic on $D$. Moreover, by the absolute continuity of the transition function, the triple $(\{ u(Z_t)^{\tau_{D_1}} \}_{t>0}, \zeta, 0)$ is an $M$-valued $\mathbb{P}_z$-martingale indexed by $t\in (0,\infty)$ for all $z\in D_1$. (See Lemma 4.10 of \cite{Oka23} for details.) Thus the equivalence among (i) through (iii) in \pref{harmonicmartingale3} can be shown by Theorem 1.2 of \cite{Oka23}. The equivalence of (i) and (iv) can be verified by \lref{convtto02} below.
\end{proof}
\begin{lem}\label{convtto02}
Let $M$ be a compact submanifold of $\mathbb{R}^d$ and $(\{X_t\}_{t>0},\zeta,p)$ an $M$-valued martingale with an end point indexed by $t>0$. Then the following are equivalent:
\begin{itemize}
\item[(i)]$\displaystyle \lim_{t\to 0}X_t$ exists almost surely;
\item[(ii)]$\displaystyle \lim_{\varepsilon \to 0}\langle X,X\rangle ^{\varepsilon}_t<\infty$ almost surely.
\end{itemize}
\end{lem}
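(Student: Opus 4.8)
The plan is to mimic the proof of Theorem~1.2 of \cite{Okazaki}, which is exactly the present statement with the limit $t\to\infty$ in place of $t\to0$, and to reorganize it so that it applies at the initial time; the only genuinely new feature is that a martingale indexed by $(0,\infty)$ carries no a priori value at $0$, so where the $t\to\infty$ argument uses forward $L^2$-convergence of martingales, the present one will use the backward martingale convergence theorem. Fix $T>0$ (the general $t$ is identical). The first step is to recall the embedded description of $M$-valued martingales (cf.\ \cite{Pic1991}, \cite{Pic1994}, \cite{Okazaki}): since $M$ is compact, on each interval $[\varepsilon,T]$ the $\mathbb{R}^d$-valued semimartingale $X$ is special, $X_t=X_\varepsilon+N^\varepsilon_t+A^\varepsilon_t$, where $N^\varepsilon$ is an $\mathbb{R}^d$-valued local martingale vanishing at $\varepsilon$ and $A^\varepsilon$ is the purely normal drift; the $A^\varepsilon$ are mutually compatible ($A^\varepsilon_t-A^\varepsilon_s=A^s_t$ for $\varepsilon\le s\le t$) and continuous (by quasi-left-continuity of $X$), so the quadratic variation of $X$ coincides with that of its martingale part and $\langle X,X\rangle$ is a.s.\ finite together with the dual predictable projection of $[X,X]$. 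Since the tangential part of the drift vanishes by the defining property of an $M$-valued martingale and its normal part is quadratic in the jump size ($M$ being a smooth compact submanifold), one has the geometric bound
\[
\mathrm{Var}_{[\varepsilon,t]}(A^\varepsilon)\ \le\ C\,\langle X,X\rangle^\varepsilon_t,\qquad C=C(M)<\infty,
\]
so that (ii) is equivalent to $\sup_{\varepsilon>0}\langle X,X\rangle^\varepsilon_T<\infty$ a.s.

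For (ii)$\Rightarrow$(i): under (ii) the geometric bound gives $\sup_{\varepsilon>0}\mathrm{Var}_{[\varepsilon,T]}(A^\varepsilon)<\infty$ a.s., so the compatible family $\{A^\varepsilon\}$ is Cauchy and produces a finite-variation process $\bar A$ on $(0,T]$ with $\lim_{t\to0}\bar A_t$ existing; in particular $\bar A$ is bounded, and since $M$ is compact $X$ is bounded, hence $Y:=X-\bar A$ is a bounded process on $(0,T]$. The compatibility relations give $Y_t-Y_s=N^s_t$ for $0<s<t\le T$, a bounded and therefore true martingale increment, so $\mathbb{E}[Y_t\mid\mathcal{F}_s]=Y_s$; thus $\{Y_t\}_{0<t\le T}$ is a bounded reverse martingale closed by $Y_T$, and $\lim_{t\to0}Y_t$ exists a.s.\ by the backward martingale convergence theorem. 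Consequently $\lim_{t\to0}X_t=\lim_{t\to0}(Y_t+\bar A_t)$ exists a.s.; here (ii) is used only to make the drift converge, the martingale part converging for free. Killing is harmless: on $\{\zeta=0\}$ the assertion is trivial, and otherwise one runs the argument on $(0,\zeta)$.

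For (i)$\Rightarrow$(ii): assume $X_t\to X_{0+}$ a.s.\ as $t\to0$, and cover $M$ by countably many geodesic balls $B_i$ small enough that the squared distance $\phi_i$ to the center of $B_i$ is strictly convex on $B_i$. On $\{X_{0+}\in\mathrm{int}\,B_i\}$ the path $X$ stays in $B_i$ on some random interval $(0,\delta]$, so $\rho_i:=\inf\{t>0:X_t\notin B_i\}$ is a stopping time for the right-continuous augmented filtration with $\rho_i>0$, and $\phi_i(X)$, being a convex function of an $M$-valued martingale that stays in $B_i$, is a bounded submartingale on $[0,\rho_i)$ whose Doob--Meyer increasing part $a$ satisfies $da\ge c_i\,d\langle X,X\rangle$ for some $c_i>0$ (Hessian term for the continuous part, quadratic lower bound from strict convexity for the jumps). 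Optional stopping and localization on $[\varepsilon,t\wedge\rho_i]$, followed by $\varepsilon\downarrow0$, yield
\[
\mathbb{E}\Bigl[\lim_{\varepsilon\to0}\langle X,X\rangle^\varepsilon_{t\wedge\rho_i}\Bigr]\ \le\ c_i^{-1}\,\mathbb{E}[a_{t\wedge\rho_i}]\ \le\ c_i^{-1}\sup_{B_i}\phi_i\ <\ \infty,
\]
so $\lim_{\varepsilon\to0}\langle X,X\rangle^\varepsilon_{t\wedge\rho_i}<\infty$ a.s.; adding the contribution of the quadratic variation over $[\rho_i,t]$, which is a.s.\ finite because $\rho_i>0$, gives (ii) on $\{X_{0+}\in\mathrm{int}\,B_i\}$, and a union over $i$ finishes the proof.

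I expect the direction (i)$\Rightarrow$(ii) to be the main obstacle. Boundedness of $X$ does not bound $N^\varepsilon$ and $A^\varepsilon$ individually near $0$, so the required control of the quadratic variation cannot be read off the embedded decomposition and must instead be extracted by the convex-function localization to a small geodesic ball, carried out near $t=0$ rather than near $t=\infty$ as in \cite{Okazaki}; the key technical points are that exit from a ball after time $0$ is a genuine stopping time and that the submartingale-plus-convexity inequality is available for the discontinuous $M$-valued martingales of \cite{Pic1991,Pic1994}.
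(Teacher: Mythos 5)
Your plan for (ii)$\Rightarrow$(i) is organized differently from the paper's: the paper never takes the canonical decomposition of $X$ itself, but applies It\^o's formula to $\bar f(X)$ for smooth $f$ and bounds the discrepancy of the finite-variation parts between two levels $\varepsilon_1<\varepsilon_2$ by $\|\mathrm{Hess}\bar f\|$ times increments of $[X^c,X^c]$ and $\int|x|^2\,\nu$ (inequalities \eqref{quadconti}--\eqref{quadjump}), which vanish under (ii) because of \eqref{dualquad}; the limiting drift $U+V$ then exists, $f(X)-U-V$ is a local martingale with bounded jumps, and the convergence at $0$ is obtained as in Theorem 1.2 of \cite{Okazaki}. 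Your route instead hinges on the bound $\mathrm{Var}_{[\varepsilon,t]}(A^{\varepsilon})\le C\,\langle X,X\rangle^{\varepsilon}_t$ for the drift of $X$ itself, and this is precisely the technical heart of the direction, yet you only assert it: to prove it you would have to run essentially the same It\^o argument the paper runs (e.g. apply It\^o to the normal displacement $y\mapsto y-\pi(y)$ of a tubular neighbourhood, use that the tangential projection of $dA^{\varepsilon}$ vanishes and that the normal component of a chord of a compact $M$ is quadratic in its length), and you must also handle the killing jump to $p$, which need not lie in $M$, and possible jumps of $A^{\varepsilon}$ (quasi-left-continuity of $X$ is not among the hypotheses, so your parenthetical justification of continuity of the drift is unfounded; without it $[X,X]$ also contains drift-jump terms). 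A second concrete flaw is the backward-martingale step: (ii) gives only that $\bar A$ has almost surely finite variation, so $Y=X-\bar A$ is pathwise bounded, not a bounded process, and $N^{s}$ is a priori only a local martingale; hence ``bounded and therefore true martingale increment'' does not follow, and $Y_s=\mathbb{E}[Y_T\mid\mathcal{F}_s]$ requires a localization (for instance, stop when the accumulated drift variation exceeds $K$, using that the jumps of the predictable finite-variation part are bounded because the jumps of $X$ are) before the downward convergence theorem applies. With these two repairs your argument would work, and it is an attractive variant in that the martingale part then converges at $0$ ``for free''; as written, though, both key steps are unproved.

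For (i)$\Rightarrow$(ii) the paper is far shorter: from (i) it concludes, via Theorem 1.2 of \cite{Okazaki}, that the bracket is finite near $0$ and that $\{X_t\}_{t\ge0}$ is an $M$-valued martingale with end point, and then reads (ii) off the identity $\langle X,X\rangle^{\varepsilon}_t=A_t-A_{\varepsilon}$ for the dual predictable projection $A$ of $[X,X]$. Your convex-function/exit-time argument is essentially a re-proof, near $t=0$, of the cited direction of Okazaki's theorem; its skeleton (small balls, $\phi_i(X)$ a submartingale whose increasing part dominates $c_i\,d\langle X,X\rangle$, optional stopping, then the piece of the bracket beyond $\rho_i$) is the standard one, but the jump versions of the convexity inequality and of the submartingale property are again asserted rather than proved, and the optional-stopping step needs the usual localization of the local-martingale part. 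In sum: the overall strategy is sound and the organization of (ii)$\Rightarrow$(i) is genuinely different from the paper's, but the proposal has two real gaps --- the unproved drift-versus-bracket bound for $X$ itself, and the conflation of pathwise boundedness with boundedness needed to get a closed reverse martingale --- that must be filled before it is a proof.
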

\begin{rem}
In (ii) of \lref{convtto02}, the existence of $\dis \lim_{\ep \to 0}\langle X,X \rangle^{\ep}_t$ is guaranteed by the monotonicity obtained from \eqref{nu} and \eqref{dualquad} below.
\end{rem}
\begin{proof}[Proof of \lref{convtto02}]
First we assume (i). Then we have $\displaystyle \lim_{\ep \to 0}[X,X]^{\ep}_t<\infty$ and consequently, the process $\{X_t\}_{t\geq 0}$ is an $M$-valued martingale with the end point $p$ by Theorem 1.2 of \cite{Oka23}. Let $A$ be the dual predictable projection of $[X,X]$. Then we have
\[
A_{t}-A_{\ep}=\langle X,X\rangle^{\ep}_{t}.
\]
Thus (ii) follows.

Next we assume (ii). We set $\displaystyle a=\sup_{0\leq t<\infty}|\Delta X_t|$. Then $a<\infty$ since $M$ is compact. We let the triple $(B^{\ep}, C^{\ep},\nu^{\ep})$ be the characteristics of the $d$-dimensional semimartingale $\{X_t\}_{t\geq \ep}$ with respect to the truncation function $\mathbf 1_{\{|x|<a \}}$, that is, $B_t$ be the predictable locally bounded variation part of the canonical decomposition of special semimartingale $\{X_t \}_{t\geq \ep}$,
\begin{gather*}
C^{\ep, i,j}_t:=[X^i,X^j]^{c,\ep}_t,
\end{gather*}
and $\nu^{\ep}$ be the good version of the dual predictable projection of the random measure
\[
\mu^{\ep}(\omega,(t_1,t_2]\times J)=\sum_{\ep<s\leq t}\mathbf{1}_{\{\Delta X_s(\omega)\neq 0 \}}\delta_{(s,\Delta X_s(\omega))}((t_1,t_2]\times J),
\]
for $0<t_1\leq t_2$, and $J \in \mathcal{B}(\mathbb{R}^d)$. We note that for $0<\ep_1<\ep_2$,
\begin{align}\label{Cij}
C^{\ep_1,i,j}_t=C^{\ep_2,i,j}_t+C^{\ep_1,i,j}_{t\land \ep_2}.
\end{align}
In particular, for $0<s<t$, the matrix
\[
\left\{ \left(C^{\ep_1,i,j}_t-C^{\ep_2,i,j}_t \right)-\left( C^{\ep_1,i,j}_s-C^{\ep_2,i,j}_s \right) \right\}_{i,j}
\]
is non-negative definite. Similarly, it holds that
\[
\mu^{\ep_1}(\omega, (t_1,t_2]\times J)=\mu^{\ep_2}(\omega, (t_1,t_2]\times J)+\mu^{\ep_1}(\omega, (t_1, t_2 \land \ep_2] \times J).
\]
Thus we have
\begin{align}\label{nu}
\nu^{\ep_1}(\omega, (t_1,t_2]\times J)=\nu^{\ep_2}(\omega, (t_1,t_2]\times J)+\nu^{\ep_1}(\omega, (t_1, t_2 \land \ep_2] \times J).
\end{align}
Let $f\in C^{\infty}(M)$ be any smooth function and $\bar{f}\in C^{\infty}_0(\mathbb{R}^d)$ the extension of $f$ given as \eqref{nabla}. By It\^o's formula, we have
\begin{align*}
\bar{f}(X_t)-\bar{f}(X_{\ep})&= \int_{\ep+}^t\langle D\bar{f}(X_{s-}),dX_s\rangle+\frac{1}{2}\int_{\ep+}^t\partial_i \partial_j \bar{f}(X_{s-})\, dC^{\ep,i,j}_s\\
&\h +\int_{\ep+}^t\int_{\mathbb{R}^d}\left( \bar{f}(X_{s-}+x)-\bar{f}(X_{s-}) \right)\, (\mu^{\ep}-\nu^{\ep})(dsdx)\\
&\h +\int_{\ep+}^t\int_{\mathbb{R}^d} W(s,x)\, \nu^{\ep}(dsdx)
\end{align*}
for $0<\ep \leq t$, where
\[
W(t,x):=\bar{f}(X_{s-}+x)-\bar{f}(X_{s-})-\langle D\bar{f}(X_{s-}),x\rangle.
\]
Moreover, for $0<\ep_1<\ep_2$, we have
\begin{align}
&\left| \int_{\ep_1+}^t\partial_i \partial_j \bar{f} (X_{s-})\, dC^{\ep_1,i,j}_s-\int_{\ep_2+}^t\partial_i \partial_j \bar{f} (X_{s-})\, dC^{\ep_2,i,j}_s\right| \nonumber \\
&\h \leq \| \mathrm{Hess} \bar{f}\| \sum_{i=1}^d \int_{\ep_1+}^t\, \left|d \left(C^{\ep_1,i,i}_t-C^{\ep_2,i,i}_t\right) \right| \nonumber \\
&\h \leq \| \mathrm{Hess} \bar{f}\| \left( [X^c,X^c]^{\ep_1}_t-[X^c,X^c]^{\ep_2}_t\right), \label{quadconti}\\
&\left| \int_{\ep_1+}^t\int_{\mathbb{R}^d}W(s,x)\,  \nu^{\ep_1}(dsdx)-\int_{\ep_2+}^t\int_{\mathbb{R}^d}W(s,x)\, \nu^{\ep_2}(dsdx)\right| \nonumber \\
&\h \leq \| \mathrm{Hess} \bar{f} \| \left( \int_{\ep_1+}^t\int_{\mathbb{R}^d}|x|^2\, \nu^{\ep_1}(dsdx)- \int_{\ep_2+}^t\int_{\mathbb{R}^d}|x|^2\, \nu^{\ep_2}(dsdx)\right)\label{quadjump}
\end{align}
by \eqref{Cij} and \eqref{nu}, where $\dis \|\mathrm{Hess} \bar{f} \| = \sup_{x\in \mathbb{R}^d}|\mathrm{Hess}\bar{f}(x)|$. Since it holds that
\begin{align}\label{dualquad}
\langle X,X\rangle^{\ep}_t= [X^c,X^c]^{\ep}_t+\int_{\ep+}^t\int_{\mathbb{R}^d}|x|^2\, \nu^{\ep}(dsdx),
\end{align}
the right-hand sides of \eqref{quadconti} and \eqref{quadjump} converge to $0$ as $\ep_2 \to 0$.
Thus we can define $U_t$, $V_t$ by
\begin{align*}
U_t&=\lim_{\ep \to 0} \int_{\ep+}^t \partial_i \partial_j \bar{f} (X_{s-})\, dC^{\ep,i,j}_s,\\
V_t&=\lim_{\ep \to 0} \int_{\ep+}^t\int_{\mathbb{R}^d}\left( \bar{f}(X_{s-}+x)-\bar{f}(X_{s-})-\langle D\bar{f}(X_{s-}),x\rangle  \right)\, \nu^{\ep}(dsdx).
\end{align*}
We set $H_t:=f(X_t)-U_t-V_t$. Then $\{ H_t\}_{t>0}$ is a local martingale with bounded jump since $\{X\}_{t>0}$ is an $M$-valued martingale with an end point. Thus in the same way as the proof of Theorem 1.2 of \cite{Oka23}, we can show that $\displaystyle \lim_{t \to 0}X_t$ exists $\mathbb{P}$-a.s.
\end{proof}
To consider the continuity of weakly harmonic maps in the topology with respect to the metric on $E$ and compare it with the fine continuity, hereafter we limit the scope of the metric space $E$ to either of the following two cases as domains of harmonic maps:
\begin{description}
\item[Case 1.]Let $m\geq 3$ and
\begin{align*}
\left\{
\begin{array}{ll}
\dis \mathcal{F}=H^1(\mathbb{R}^m) \\
\dis \mathcal{E}(u,v)=\int_{\mathbb{R}^m}\nabla u(z) \cdot \nabla v(z)\, dz.
\end{array}
\right.
\end{align*}
For a bounded smooth domain $D\subset \mathbb{R}^m$ with smooth boundary and $u\in H^1_{loc}(D)$, we set
\begin{gather*}
\what{\mathcal{E}}^D(u):=\int_D|\nabla u|^2(z)\, dz.
\end{gather*}
In this case, if we set
\[
\what{H}^1(D):=\{ u\in L^2_{loc}(\mathbb{R}^m) \mid \widehat{\mathcal{E}}^D(u)<\infty \},
\]
then $\what{H}^1(D)=H^1(D)$.
\item[Case 2.]Let $m\geq 2$, $\alpha \in (0,2)$ and
\begin{align*}
\left\{
\begin{array}{ll}
\dis \mathcal{F}=H^{\frac{\alpha}{2}}(\mathbb{R}^m) \\
\dis \mathcal{E}(u,v)=c_{m, \alpha}\int_{\mathbb{R}^m\times \mathbb{R}^m}\frac{(\tilde u(z)-\tilde u(w))(\tilde v(z)-\tilde v(w))}{|z-w|^{m+\alpha}} \, dwdz,
\end{array}
\right.
\end{align*}
where
\[
c_{m,\alpha}=\alpha 2^{\alpha-2}\pi^{-\frac{m+2}{2}}\sin \left( \frac{\alpha \pi}{2}\right) \Gamma \left( \frac{m+\alpha}{2}\right)  \Gamma \left( \frac{\alpha}{2}\right).
\]
For a bounded smooth domain $D\in \mathbb{R}^m$ with smooth boundary and $u\in L^2_{loc}(\mathbb{R}^m)$, we set
\begin{align*}
\widehat{\mathcal{E}}^D(u)&:=c_{m,\alpha}\int \int_{D\times D}\frac{|u(z)-u(w)|^2}{|z-w|^{m+\alpha}}\, dwdz\\
&\h +2c_{m,\alpha}\int \int_{D\times (\mathbb{R}^m\, \backslash \, D)}\frac{|u(z)-u(w)|^2}{|z-w|^{m+\alpha}}\, dwdz,\\
\widehat{H}^{\frac{\alpha}{2}}(D)&=\{ u\in L^2_{loc}(\mathbb{R}^m) \mid \widehat{\mathcal{E}}^D(u)<\infty \}.
\end{align*}
See \cite{MPS} for details about the space $\widehat{H}^{\frac{\alpha}{2}}(D)$.
\end{description}
The corresponding Markov processes $Z$ are Brownian motion and $\alpha$-stable process in Case 1 and Case 2, respectively.
%\begin{rem}
%If we set
%\begin{align*}
%\widehat{\mathcal{E}}^D(u,v)&:=c_{m,\alpha}\int \int_{D\times D}\frac{(u(z)-u(w))\cdot(v(z)-u(z))}{|z-w|^{m+\alpha}}\, dwdz\\
%&\h +2c_{m,\alpha}\int \int_{D\times (\mathbb{R}^m\, \backslash \, D)}\frac{(u(z)-u(w))\cdot(v(z)-u(z))}{|z-w|^{m+\alpha}}\, dwdz
%\end{align*}
%for $u,v \in \widehat{H}^{\frac{\alpha}{2}}(D)$, then $(\what{H}^{\frac{\alpha}{2}}(D),\what{\mathcal{E}}^D)$ is a Hilbert space. In \cite{MPS}, $\frac{\alpha}{2}$-harmonic map is defined by a map $u\in \what H^{\frac{\alpha}{2}}(D; M)$ satisfying 
%\[
%\what{\mathcal{E}}^D(u,\phi)=0.
%\]
%for any $\phi \in C_0^{\infty}(D; u^*TM)$.
%\end{rem}
\begin{rem}\label{counter}
In \cite{Riv95}, T. Rivi\'ere constructed a weakly harmonic map $u$ with respect to the Dirichlet energy from the $3$-dimensional unit ball $B_1$ into the $2$-dimensional unit sphere $\mathbb{S}^2$ which is everywhere discontinuous. On the other hand, $u$ has a quasi-continuous modification since the Dirichlet energy of $u$ is finite. Thus there is a difference between the continuity in the Euclidean topology and that in the fine topology if we do not impose any restrictions for harmonic maps.
\end{rem}
\begin{dfn}\label{tangentmap}
We assume the setting of either Case 1 or Case 2. Let $u\in \mathcal{F}^D_{loc}(M)$ be a weakly harmonic map. Fix $z_0 \in D$ and $\rho>0$ such that $\overline{B_{\rho}(z_0)}\subset D$. Then $\varphi \colon \mathbb{R}^d\to M$ is called a tangent map of $u$ at $z_0$ if for all $R>0$, there exists a decreasing sequence $\{r_i \}_{i=1}^{\infty}$ with $r_i\to 0$, $r_1<\frac{\rho}{R}$ such that
\[
u_{z_0,r_i} \to \varphi \ \text{weakly in}\ (\what{\mathcal{F}}^{B_R},\what{\mathcal{E}}^{B_R})\ \text{as}\ i \to \infty,\ \text{where}\ u_{z_0,r}(z):=u(z_0+rz),
\]
where
\[
\what{\mathcal{F}}^{B_R}:=\{v \colon E \to \mathbb{R}^d \mid v^i \in L^2_{loc}(\mathbb{R}^d),\ i=1,\dots,d,\ \what{\mathcal{E}}^{B_R}(v)<\infty \}.
\]
\end{dfn}
To see the equivalence of the continuity in Euclidean topology and that in fine topology in some cases, we impose some conditions on weakly harmonic maps.
\begin{as}\label{astangent}
We assume the setting of either Case 1 or Case 2. Let $u\in \mathcal{F}^D_{loc}(M)$ be a weakly harmonic map with $\what{\mathcal{E}}^D(u)$. For each $z_0\in D$ and tangent map $\varphi$ of $u$ at $z_0$, we assume that $u$ satisfies the following two conditions.
\begin{description}
\item[(i)]Under the notation in \dref{tangentmap}, there exists a subsequence $\{r_{i_j} \}$ such that
\[
\what{\mathcal{E}}^{B_R} (u_{z_0,r_{i_j}}- \varphi) \to 0\ \text{as}\ j \to \infty.
\]
\item[(ii)]$u$ and $\varphi$ have modifications which are continuous quasi-everywhere. In addition, $u$ is continuous at $z_0$ if and only if any tangent map at $z_0$ is constant almost everywhere.
\end{description}
\end{as}
We remark that there are enough examples satisfying \asref{astangent} as follows.
\begin{ex}\label{ex1}
In Case 1, we set
\[
H^1(D;M):=\{u \in \mathcal{F}^D_{loc}(M) \mid \what{\mathcal{E}}^D(u) < \infty \}.
\]
We say that $u\in H^1(D;M)$ is an energy minimizing map on $D$ if it satisfies
\[
\what{\mathcal{E}}^D(u)\leq \what{\mathcal{E}}^D(u')
\]
for all $u'\in H^1(D;M)$ with
\[
\mathrm{supp}[u-u']\subset D,
\]
where $\mathrm[\cdot]$ is the support of a vector-valued function. Every energy minimizing map $u$ is a weakly harmonic map and \asref{astangent} is satisfied by the result of \cite{SU82}.
\end{ex}

\begin{ex}\label{ex2}
In Case 1, we further suppose $M=\mathbb{S}^n$. A weakly harmonic map $u\in H^1(D; \mathbb{S}^n)$ is said to be stationary if 
\[
\left. \frac{d}{dt} \right|_{t=0}\what{\mathcal{E}}(u\circ \Phi_t)=0\ \text{for all}\ \mathcal{X}\in C_0^1(D;\mathbb{R}^m),
\]
where $\{\Phi_t\}_{t\in \mathbb{R}}$ is the integral curves of the vector field $\mathcal{X}$. Furthermore, $u$ is said to be stable if
\[
\left. \frac{d^2}{dt^2} \right|_{t=0}\what{\mathcal{E}}\left( \frac{u + t\psi}{|u+t\psi|} \right)\geq 0
\]
for all $\psi \in C^{\infty}_0(D;\mathbb{R}^{n+1})$ with $u(z)\cdot \psi (z)=0$ a.e. $z \in D$. If $u\in H^1(D;\mathbb{S}^n)$ is a stable stationary harmonic map, then \asref{astangent} is satisfied by \cite{Evans, HW99}.
\end{ex}

\begin{ex}\label{ex3}
In Case 2, we further suppose that $M=\mathbb{S}^n$. Then we can define minimizing or stationary harmonic maps with respect to the fractional Dirichlet energy in the same way as \eref{ex1} and \eref{ex2}. In \cite{MPS}, it was shown that \asref{astangent} is satisfied in the following two cases.
\begin{itemize}
\item $\alpha \in (0,2)\, \backslash \, \{1\}$, $n>\alpha$, and $u$ is a stationary $\frac{\alpha}{2}$-harmonic map.
\item $u$ is a minimizing $\frac{1}{2}$-harmonic map.
\end{itemize}
\end{ex}
Before stating \pref{minimizing} below, we recall a censored $\alpha$-stable process and a reflecting $\alpha$-stable process.
For an open set $D$, we set
\begin{align}\label{reflecting}
\left\{
\begin{array}{ll}
\dis \mathcal{C} = \{ u\in L^2(D)\cap \mathcal{B}(D) \mid \mathcal{D}(u,u)<\infty \}, \\
\dis \mathcal{D} (u,v)=c_{m, \alpha}\int_{D \times D}\frac{(u(z)-u(w))(v(z)-v(w))}{|z-w|^{m+\alpha}} \, dwdz,
\end{array}
\right.
\end{align}
Then $(\mathcal{D}, \mathcal{C})$ is a Dirichlet form on $L^2(D)$. %Let $\mathcal{C}^0$ be the closure of $C^{\infty}_0(D)$ in $(\mathcal{D}, \mathcal{C})$ and let $\mathcal{D}^0:=\mathcal{D}|_{\mathcal{C}^0 \times \mathcal{C}^0}$. Then $(\mathcal{D}^0, \mathcal{C}^0)$ is a regular Dirichlet form on $L^2(D)$ and the corresponding symmetric Hunt process $Z^0$ is called a censored $\alpha$-stable process.
If we further suppose that $D$ is a smooth domain, then the space $\mathcal{C}$ is identical to $H^{\frac{\alpha}{2}}(D)$ and the pair $(\mathcal{D}, \mathcal{C})$ becomes a regular Dirichlet form on $L^2(\overline{D})$. The corresponding Hunt process $(Z^*, \mathbb{P}_z^*)$ is called a reflecting $\alpha$-stable process. %Note that the Dirichlet form $(\mathcal{D}^0,\mathcal{C}^0)$ is the part form of $(\mathcal{D}, \mathcal{C})$ on $D$ and $Z^0$ can be realized as the part process of $Z^*$.

For a process $H$ and a stopping time $\tau$, we set
\[
H^{\tau-}_t:=H_t\mathbf{1}_{[0,\tau)}(t)+H_{\tau-}\mathbf{1}_{[\tau,\infty)}
\]

\begin{prop}\label{minimizing}
In addition to the setting of \pref{harmonicmartingale3}, we further assume the setting of either Case 1 or Case 2. Let $u\in \mathcal{F}^D_{loc}(M)$ be a weakly harmonic map satisfying \asref{astangent}. Then for $z_0\in D$, the following (v) is also equivalent to (i)--(iv) in \pref{harmonicmartingale3}.
\begin{itemize}
\item[(v)]$u$ has a modification which is continuous at $z_0$.
\end{itemize}
\end{prop}
\begin{rem}\label{thinness}
As for Case 1, we can show \pref{minimizing} above by the application of Wiener's criterion from classical potential theory. If $u$ is finely continuous at $z_0$, then it holds that
\begin{align}\label{line}
\lim_{r\to 0} u(z_0+rz)= u(z_0)\ \text{for a.e.}\ z\in D
\end{align}
by \cite{Deny48} or Theorem 7.8.1 of \cite{AG}. Let $\varphi$ be a tangent map at $z_0$. Fix $\rho>0$ such that $\overline{B_{\rho}(z_0)}\subset D.$ Then (i) of \asref{astangent} enables us to take a decreasing sequence $r_1\geq r_2 \geq \dots \to 0$ such that for all $R>0$,
\[
\what{\mathcal{E}}^{B_{R}} (u_{z_0,r_{i}}- \varphi) \to 0\ \text{as}\ i \to \infty.
\]
By the Sobolev embedding theorem, we have
\[
\lim_{i\to \infty}\int_{B_{R}}|u_{z_0,r_i}(z)-\varphi(z) |^2\, dz=0.
\]
Thus by the bounded convergence theorem, we have
\[
\int_{B_{R}}|u(z_0)-\varphi(z)|^2\, dz=0.
\]
This means that any tangent map of $u$ at $z_0$ is a.e. constant. Thus by (ii) of \asref{astangent}, $u$ is continuous at $z_0$.

Unfortunately, the same proof does not work in Case 2. Indeed, we can construct a function $f \colon \mathbb{R}^m \to \mathbb{R}$ which is finely continuous at $0$ but does not have a modification satisfying \eqref{line}. Let $m\geq 2$ and $\alpha \in (0,1]$. We set
\[
S_r^{\ep}:=\{ z\in \mathbb{R}^m \mid r-\ep \leq |z| \leq r+\ep\}
\]
for $0< \ep<r$. Since $\alpha \in (0,1]$ yields $\mathrm{Cap}^0(S_r)$=0, we have
\[
\lim_{\ep \to 0}\mathrm{Cap}^0(S_r^{\ep})=0,
\]
where $\mathrm{Cap}^0$ is the outer $0$-capacity with respect to the Riesz kernel $g$ with index $\alpha$ defined by
\[
g(z,w)=\frac{c_{m,\alpha}}{|z-w|^{m-\alpha}}.
\]
For fixed $\lambda >1$, we set
\[
r_n:= \frac{1}{2}c_{m,\alpha}^{\frac{1}{m-\alpha}} \left( \lambda^{-\frac{n}{m-\alpha}}-\lambda^{-\frac{n+1}{m-\alpha}} \right)
\]
and take $\ep_n>0$ such that
\[
\mathrm{Cap}^0(S_{r_n}^{\ep_n})<2^{-n} \lambda^{-n}.
\]
We set
\begin{align*}
S&:= \bigcup_{n=1}^{\infty} S_{r_n}^{\ep_n},\\
A_n&:=\{ z\in S \mid \lambda ^n \leq g(0,z) \leq \lambda^{n+1} \}.
\end{align*}
Then
\[
\sum_{n=1}^{\infty}\lambda^n\mathrm{Cap}^0(A_n \cap S)<1.
\]
Thus the Wiener's criterion for the Riesz kernel yields that $S$ is thin at $0$, i.e.
\[
\mathbb{P}_0\left(\sigma_S=0 \right)=0.
\]
(We can completely track the probabilistic proof of Wiener's criterion in \cite{Bass} even in the case of the Riesz kernel.) Therefore, if we set $f:= \mathbf{1}_S$, then $f$ is finely continuous at $0$ but obviously there does not exist a modification of $f$ satisfying \eqref{line}.
\end{rem}
\begin{proof}[Proof of \pref{minimizing} in Case 2]
We only have to show that (i) yields (v). Fix $R>0$. Then we can assume that the sequence $\{r_i \}_{i=1}^{\infty}$ satisfies $B_{r_1R}(z_0) \subset D$ without loss of generality. We set $\varphi_j := u_{z_0,r_{i(j)}}-\varphi$. Let $(\mathcal{D}, \mathcal{C})$ be a Dirichlet form defined in the same way as \eqref{reflecting} on $B_R$ and $(Z^*, \mathbb{P}_z^*)$ be a reflecting $\alpha$-stable process on $B_R$. Then by assumption,
\begin{align*}
\mathcal{D}(\varphi_j,\varphi_j)\leq \widehat{\mathcal{E}}^{B_R}(\varphi_j,\varphi_j) \to 0\ \text{as}\ j \to \infty.
\end{align*}
This implies that
\begin{align*}
\mathbb{P}^*_z\left( \varphi_j (Z^*_t) \to 0\ \text{uniformly in}\ t\in [0,\infty)\ \text{as}\ j \to \infty \right)=1\ \text{for q.e.}\ z\in B_R.
\end{align*}
Since $Z^*$ has the transition density, we have
\begin{align*}
\mathbb{P}^*_z\left( \varphi_j (Z^*_t) \to 0\ \text{uniformly on any compact subset of}\ (0, \infty)\ \text{as}\ j \to \infty \right)=1\ \text{for all}\ z\in B_R.
\end{align*}
Since $Z^*$ has the same law as the part process of $Z$ on $[0, \tau_{B_R})$, we have
\begin{align*}
\mathbb{P}_z\left( \varphi_j (Z_{t \land \tau_{B_R}})^{\tau_{B_R}-} \to 0\ \text{uniformly on any compact subset of}\ (0, \infty)\ \text{as}\ j \to \infty \right)=1
\end{align*}
for any $z\in D$ and this holds for any fixed $R>0$. On the other hand, for any bounded continuous function $F$ on $\mathbb{R}^d$,
\begin{align*}
\mathbb{E}_{z_0}\left[ F(u_{z_0,r_i} (Z_t)^{\tau_{B_R}-} ) \right] &= \mathbb{E}_0\left[ F(u(z_0+Z_{r^{\alpha}_i t})^{\tau_{\alpha,r_i}-} ) \right]\\
& \to F(u(z_0))
\end{align*}
by the scale invariance and bounded convergence theorem, where
\begin{align*}
\tau_{\alpha,r}=\inf \{s\geq 0\mid Z_s \notin B_{r^{\frac{\alpha}{2}}R}\}.
\end{align*}
This yields that the law of $\varphi(Z_t)^{\tau_{B_R}-}$ concentrates at $u(z_0)$ for each $t>0$. This implies that $\varphi(Z)^{\tau_{B_R}}$ is constant along the path of $Z^{B_R}$ almost surely under $\mathbb{P}_0$ since it is a \cl process. Consequently, $\varphi(Z)$ is constant $\mathbb{P}_0$-a.s. since we took an arbitrary $R>0$. Take any continuous point $z\in \mathbb{R}^d$ of $\varphi$. Then for any $\eta >0$, there exists $\delta>0$ such that for all $w\in B_{\delta}(z)$, $|\varphi(z)-\varphi(w)|<\eta$. On the other hand, since $Z$ is irreducible, $\mathbb{P}_0(\{ \omega ; Z(\omega)\cap B_{\delta}(z)\neq \emptyset \})>0$. Thus by taking $t>0$ and $\omega$ such that the path $t \mapsto \varphi(Z_t(\omega))$ is constant and $Z_t(\omega)\in B_{\delta}(z)$, we have
\[
|\varphi(z)-\varphi(0)|=|\varphi(z)-\varphi(Z_t(\omega))|<\eta.
\]
Therefore $\varphi = \varphi(0)$ a.e. This implies $u$ is continuous at $z_0$ by \asref{astangent}.
\end{proof}
\begin{ex}
In Case 2, a weakly harmonic map $u\in \mathcal{F}^D_{loc}(M)$ with $\what{\mathcal{E}}(u)<\infty$ satisfying \asref{astangent} is continuous at $z_0\in D$ if and only if
\[
\lim_{\ep \to 0}\int_{\ep \land \tau_{D_1}}^{t\land \tau_{D_1}}\int_{\mathbb{R}^m}\frac{|u(Z_s)-u(z)|^2}{|Z_s-z|^{m+\alpha}}\, dzds<\infty,\ \Prob_{z_0}\text{-a.s.}
\]
for a relatively compact open subset $D_1$ in $D$ with $z_0\in D_1$.
\end{ex}
\begin{rem}
At the moment we do not know if the same argument holds or not for all stationary harmonic maps since our proof of \pref{minimizing} relies on \asref{astangent}.
\end{rem}

\section*{Acknowledgements}
The author would like to express his gratitude to Professor Hariya, his supervisor, for his careful reading of the manuscript and for helpful comments. The author also thanks a referee for his/her constructive comments and corrections which have led to significant improvement of this paper.

\section*{Statements and Declarations}
{\bf Competing interests:} No conflict is related to this article, and the author has no relevant financial or non-financial interests to disclose.

{\bf Ethical Approval:} Not applicable to this article.

{\bf Funding:} This work was supported by JSPS KAKENHI Grant Number 22J11051.

{\bf Data Availability Statements:} Data sharing is not applicable to this article as no datasets were generated or analyzed during the current study.
%%%%%%%%%%%%%%%%%%%%%%%%%%%%%%%%%%%%%%%%%%%%%%%%%%%
\begin{bibdiv}
\begin{biblist}

%「参考文献」には, 欧文図書, 欧文論文, 
%和書, プレプリントの掲載例を示してある. 
%その他プロシーディング等の文献については, 
%これらの例やその他の文献の「参考文献」の記述方法を
%参照しながら, ご自身で記載すること. 

% 以下は MathSciNet の Batch Download を AMSRefs に選んでチェックを付け, 
% Retrieve Marked ボタンを押すと出てくる. 
\bibliography{b}
\bib{AG}{book}{
   author={Armitage, David H.},
   author={Gardiner, Stephan J.},
   title={Classical Potential Theory},
   publisher={Springer London},
   volume={},
   date={2001},
   pages={},
}
\bib{ALT99}{article}{
   author={Arnaudon, Marc},
   author={Li,Xue-Mei},
   author={Thalmaier, Anton},
   title={Manifold-valued martingales, changes of probabilities and smoothness of finely harmonic maps},
   journal={Ann. Inst. H. Poincar\'e Probab. Statist.},
   volume={35},
   date={1999},
   pages={765--791},
   issn={},
}
\bib{Bass}{book}{
	author = {Bass, R. F.},
	title = {Probabilistic Techniques in Analysis},
	series={Probability and Its Applications},
	publisher = {Springer New York, NY},
	volume = {},
	year = {1995},
	pages = {}
}
\bib{Bethuel}{article}{
  title={On the Singular Set of Stationary Harmonic Maps},
  author={Bethuel, Fabrice},
  journal={Manuscripta Math.},
  volume={78},
  year={1993},
  pages={417--443},
}
%\bib{CJPS}{article}{
	%title={Semimartingale and Markov processes},
	%author={Cinlar, E.},
	%author={Jacod, J.},
	%author={Protter, Ph.},
	%author={Sharpe, M. J.},
	%journal={Z. Wahrscheinlichkeitstheorie verw. Gebiete},
	%volume={54},
	%year={1980},
	%pages={161--219},
%}

\bib{Chen}{article}{
  title={On notions of harmonicity},
  author={Chen, Zhen-Qing},
  journal={Proceedings of the American Mathematical Society},
  volume={137},
  year={2009},
  pages={3497--3510},
}
\bib{CFK}{article}{
  title={Stochastic calculus for symmetric Markov processes},
  author={Chen, Zhen-Qing},
  author={Fitzsimmons, Patrick J.},
  author={Kuwae, Kazuhiro},
  author={Zhang, T.-S.},
  journal={Ann. Probab.},
  volume={36},
  year={2008},
  pages={931--970},
}

\bib{ChenFuku}{book}{
   author={Chen, Zhen-Qing},
   author={Fukushima, Masatoshi},
   title={Symmetric Markov Processes, Time Change, and Boundary Theory},
   publisher={Princeton University Press, Princeton},
   volume={},
   date={2011},
   pages={},
}

\bib{Deny48}{article}{
  title={Un th\'eor\`eme sur les ensembles effil\'es},
  author={Deny, J.},
  journal={Ann. Univ. Grenoble Sect. Sci. Math. Phys.},
  volume={23},
  year={1948},
  pages={139--142},
}

\bib{Lio}{article}{
	author = {Da\ Lio, F.},
	author = {Rivi\`ere, Tristan}
	title = {Three-term commutator estimates and the regularity of $\frac{1}{2}$-harmonic maps into spheres},
	journal = {Anal. PDE},
	publisher = {mathematical sciences publishers},
	volume = {4},
	number = {1}
	year = {2011},
	pages = {149--190}
}
\bib{Lio2}{article}{
author = {Da\ Lio, F.},
	author = {Rivi\`ere, Tristan}
	title = {Sub-criticality of non-local Schr\"{o}dinger systems with antisymmetric potentials and applications to half-harmonic maps },
	journal = {Adv. Math.},
	publisher = {},
	volume = {227},
	number = {}
	year = {2011},
	pages = {1300-1348}
}
\bib{Evans}{article}{
  title={Partial Regularity for Stationary Harmonic Maps into Spheres},
  author={Evans, Lawrence C.},
  journal={Arch. Ration. Mech. Anal.},
  volume={116},
  year={1991},
  pages={101--113},
}
\bib{FOT}{book}{
	author = {Fukushima, Masatoshi},
	author = {Oshima, Yoichi},
	author = {Takeda, Masayoshi},
	title = {Dirichlet Forms and Symmetric Markov Processes, 2nd ed.},
	series={de Gruyter Stud. Math.},
	publisher = {Walter de Gruyter \& Co., Berlin},
	volume = {19},
	year = {2010},
	pages = {}
}

\bib{Fuku}{article}{
	author = {Fukushima, Masatoshi},
	title = {A decomposition of additive functionals of finite energy},
	journal = {Nagoya Math. J.},
	publisher = {},
	volume = {74},
	year = {1979},
	pages = {137-168}
}
\bib{HW99}{article}{
	author = {Hong, M. C.},
	author = {Wang, C.},
	title = {On the singular set of stable stationary harmonic maps},
	journal = {Calc. Var.. Partial Differential Equations},
	publisher = {},
	volume = {9},
	year = {1999},
	pages = {141--156}
}
\bib{Kuwae}{article}{
author = {Kuwae, Kazuhiro},
	title = {Stochastic calculus over symmetric Markov processes without time reversal},
	journal = {Ann. Probab.},
	publisher = {},
	volume = {38},
	year = {2010},
	number={4},
	pages = {1532-1569}
}
\bib{MPS}{article}{
	author = {Millot, V.},
	author = {Pegon, M.},
	author = {Schikorra, A.},
	title = {Partial Regularity for Fractional Harmonic Maps into Spheres},
	journal = {Arch. Ration. Mech. Anal.},
	publisher = {},
	volume = {242},
	number = {}
	year = {2021},
	pages = {747–825}
}
\bib{MSire}{article}{
	author = {Millot, Vincent},
	author = {Sire, Yannick}
	title = {On a fractional Ginzburg-Landau equation and $\frac{1}{2}$-harmonic maps into spheres},
	journal = {Arch. Ration. Mech. Anal.},
	publisher = {},
	volume = {215},
	number = {}
	year = {2015},
	pages = {125-210}
}

%\bib{Moser11}{article}{
%	author = {Moser, R.},
%	title = {Intrinsic semiharmonic maps},
%	journal = {J. Geom. Anal.},
%	publisher = {},
%	volume = {21},
%	number = {}
%	year = {2011},
%	pages = {588-598}
%}

\bib{Nakao}{article}{
   author={Nakao, Shintaro},
   title={Stochastic Calculus for Continuous Additive Functionals of Zero Energy},
   journal={Z. Wahrsch. Verw. Gebiete},
   volume={68},
   number={}
   date={1985},
   pages={557--578},
}

\bib{Oka23}{article}{
     author = {Okazaki, Fumiya},
     title = {Convergence of martingales with jumps on submanifolds of Euclidean spaces and its applications to harmonic maps},
     journal = {published online in Journal of Theoretical Probability},
     publisher = {},
     volume = {},
     number = {},
     year = {2023},
     pages = {},
     zbl = {},
}
\bib{Pic1991}{article}{
     author = {Picard, Jean},
     title = {Calcul stochastique avec sauts sur une vari\'et\'e},
     journal = {S\'eminaire de Probabilit\'es de Strasbourg},
     publisher = {Springer - Lecture Notes in Mathematics},
     volume = {25},
     year = {1991},
     pages = {196-219},
     zbl = {0749.60043},
     mrnumber = {1187781},
     language = {},
     url = {http://www.numdam.org/item/SPS_1991__25__196_0}
}
\bib{Pic1994}{article}{
   author={Picard, Jean},
   title={Barycentres et martingales sur une variété},
   journal={Annales de l'Institut Henri Poincaré Probabilités et Statistiques},
   volume={30},
   number={4}
   date={1994},
   pages={647--702},
   language={},
%   review={\MR{0236950}},
}
\bib{Pic2000}{article}{
   author={Picard, Jean},
   title={Smoothness of harmonic maps for hypoelliptic diffusions},
   journal={Ann. Probab.},
   volume={28 (2)},
   number={},
   date={2000},
   pages={643--666},
}
\bib{Pic2001}{article}{
   author={Picard, Jean},
   title={The Manifold-Valued Dirichlet Problem for Symmetric Diffusions},
   journal={Potential Analysis},
   volume={14},
   number={}
   date={2001},
   pages={53--72},
%   review={\MR{0236950}},
}
\bib{Riv95}{article}{
	author = {Rivi\'ere, T.},
	title = {Everywhere discontinuous harmonic maps into spheres},
	journal = {Acta. Math.},
	publisher = {},
	volume = {175},
	year = {1995},
	pages = {197-226}
}
\bib{SU82}{article}{
   author={Schoen, R.},
   author={Uhlenbeck, K.}
   title={A regularity theory for harmonic maps},
   journal={J. Diff. Geom},
   volume={17},
   date={1982},
   pages={307--335},
   issn={},
}

\bib{SU84}{article}{
   author={Schoen, R.},
   author={Uhlenbeck, K.}
   title={Regularity of minimizing harmonic maps into the sphere},
   journal={Invent. Math.},
   volume={78},
   date={1984},
   pages={89--100},
   issn={},
}
\bib{Thal1}{article}{
   author={Thalmaier, A.},
   title={Brownian motion and the formation of singularities in the heat flow for harmonic maps},
   journal={Probab. Theory Relat. Fields},
   volume={105},
   date={1996},
   pages={335--367},
   issn={},
}

\bib{Thal2}{article}{
   author={Thalmaier, A.},
   title={Martingales on Riemannian manifolds and the nonlinear heat equation},
   journal={In: Davies, I. M., Truman, A., Elworthy, K. D. (eds.) Stochastic Analysis and Applications, Gregynog, 1995. Proc. of the Fifth Gregynog Symposium. Singapore: World Scientific Press},
   volume={},
   date={1996},
   pages={429--440},
   issn={},
}

\bib{Walsh}{article}{
   author={Walsh, Alexander},
   title={Extended It\^o calculus for symmetric Markov processes},
   journal={Bernoulli},
   volume={18},
   date={2012},
   number={4},
   pages={1150--1171},
   issn={},
}

%和書は MathSciNet からは出力出来ない. 以下のように記述する.

\end{biblist}
\end{bibdiv}

\end{document}